\newcommand{\dd}{\mathrm{d}}
\global\let\tikz@ensure@dollar@catcode=\relax
\setlist{
  listparindent=\parindent,
  parsep=0pt,
}
\numberwithin{equation}{section}
\theoremstyle{plain} 
\newtheorem{theorem}{Theorem}[section]
\newtheorem{Lemma}[theorem]{Lemma}
\newtheorem{Proposition}[theorem]{Proposition}
\theoremstyle{definition} 
\newtheorem{Remark}[theorem]{Remark}
\definecolor{darkmagenta}{rgb}{0.5,0,0.5}
\definecolor{darkgreen}{rgb}{0,0.6,0}
\definecolor{darkblue}{rgb}{0,0,0.6}
\definecolor{darkred}{rgb}{0.8,0,0}
\definecolor{mellow}{rgb}{.847, 0.72, 0.525}
\begin{document}

\begin{frontmatter}

\title{Rate of Strong Convergence to Markov-modulated Brownian motion}
\runtitle{Rate of strong convergence to MMBM}

\begin{aug}
  \author{\fnms{Giang T.}  \snm{Nguyen}\thanksref{t2}\ead[label=e1]{giang.nguyen@adelaide.edu.au}}
  \and
  \author{\fnms{Oscar} \snm{Peralta}\thanksref{t1}\corref{}\ead[label=e2]{oscar.peraltagutierrez@adelaide.edu.au}}
  \thankstext{t2}{Supported by ARC Grant DP180103106. }
  \thankstext{t1}{Corresponding author. Supported by ARC Grant DP180103106. }
  \runauthor{Nguyen et al.}

  \affiliation{The University of Adelaide}

  \address{School of Mathematical Sciences\\ The University of Adelaide\\ SA 5000, Australia\\
\printead{e1}\\
\phantom{E-mail:\ }\printead*{e2}}
\end{aug}

\begin{abstract}
In \cite{latouche2015morphing}, the authors constructed a sequence of stochastic fluid processes and showed that it converges weakly to a Markov-modulated Brownian motion (MMBM). Here, we construct a different sequence of stochastic fluid processes and show that it converges strongly to an MMBM. To the best of our knowledge, this is the first result on strong convergence to a Markov-modulated Brownian motion. 

We also prove that the rate of this almost sure convergence is $o(n^{-1/2} \log n)$. When reduced to the special case of standard Brownian motion, our convergence rate is an improvement over that obtained by a different approximation in \cite{gorostiza1980rate}, which is $o(n^{-1/2}(\log n)^{5/2})$.
\end{abstract}

\begin{keyword}[class=MSC]
\kwd[Primary ]{	60J65}
\kwd{60J28}
\kwd[; secondary ]{41A25}
\end{keyword}

\begin{keyword}
\kwd{Markov-modulated Brownian motion}\kwd{stochastic fluid model}\kwd{strong convergence}\kwd{first passage probabilities}
\end{keyword}

\end{frontmatter}
\section{Introduction.}
\label{sect:intro} 

The family of \emph{flip-flop} processes
corresponds to a class of piecewise-linear Markov processes that converges, in some sense, to a standard Brownian motion. 
%
%
Specifically, for $\lambda>0$, let $ =\{\varphi^\lambda(t) \}_{t\ge 0}$
%
be a Markov jump process with state space $\{+,-\}$, initial distribution $(1/2, 1/2)$
%
and intensity matrix
	\begin{align*} 
		\left[\begin{array}{rr} 
		-\lambda & \lambda\\
		\lambda & -\lambda
		\end{array} \right]. 
	\end{align*}
Let $r(+) = \sqrt{\lambda}$, $r(-)=-\sqrt{\lambda}$ and define
%
%
\begin{align}
	\label{eq:flipflop1}
		F^\lambda(t)  = \int_0^t r(\varphi^\lambda(s))\dd s,\quad t\ge 0.
\end{align}
We call $\{(F^\lambda(t), \varphi^\lambda(t))\}_{t\ge 0}$ a flip-flop process. It can be shown (see, e.g.,  \cite{ramaswami2013fluid}) that  $\mathcal{F}^\lambda=\{F^\lambda(t)\}_{t\ge 0}$ converges \emph{weakly} to a standard Brownian motion $\mathcal{B} =\{B(t)\}_{t\ge 0}$ as $\lambda\rightarrow\infty$. 
%
In other words, 
\begin{align}
	\label{eq:weak1}
		\lim_{\lambda\rightarrow\infty}\mathds{E}\left[h(\mathcal{F}^\lambda)\right] = \mathds{E}\left[h(\mathcal{B})\right]
\end{align}
whenever $h:\mathcal{C}([0,\infty))\mapsto\mathds{R}$ is a bounded Borel-measurable functional continuous with respect to the topology of uniform convergence on compact intervals. Weak convergence implies that the family of probability laws induced by $\{\mathcal{F}^\lambda\}_{\lambda>0}$ is tight, and that, for any $0\le t_1 < t_2 <\dots <t_n <\infty$, 
	\begin{align*} 
		\lim_{\lambda \rightarrow \infty} (F^\lambda(t_1),F^\lambda(t_2),\dots,F^\lambda(t_n)) \buildrel{d} \over{=} (B(t_1), B(t_2),\dots, B(t_n)).
	\end{align*} 
These two properties are also sufficient conditions for (\ref{eq:weak1}) to hold \cite{Billingsley:1999uc}. As weak convergence is a statement regarding probability laws, the stochastic processes involved do not need to be defined on a common probability space. 

An alternative definition of the flip-flop process $\mathcal{F}^\lambda$ is as follows. For $t > 0$, let 
\[N^\lambda(t) =\#\{s\in(0,t]: \varphi^\lambda(s^-) \neq \varphi^\lambda(s)\},\] 
and $N^\lambda(0)= 0$. Then, $\{N^\lambda (t)\}_{t\ge 0}$ is the Poisson process of intensity $\lambda$ which counts the jumps of $\{\varphi^\lambda(t)\}_{t\ge 0}$, and we can rewrite (\ref{eq:flipflop1}) as
\begin{align}
	\label{eq:telegraph1}
	F^\lambda(t) = \sqrt{\lambda}\int_0^t (-1)^{N^\lambda (s)}\dd s,\quad t\ge 0.
\end{align}
The process $\mathcal{F}^\lambda$ defined as in (\ref{eq:telegraph1}) was first considered in \cite{goldstein1951, kac1974}, 
	%
	%
where a link between its transition probabilities and the telegraph equation was developed. In this context, $\mathcal{F}^\lambda$ became known as a \emph{telegraph process} or \emph{uniform transport process}, of which the weak convergence to $\mathcal{B}$ was proved in \cite{pinsky1968differential} and \cite{watanabe1968approximation}. 

Later on, it was proved in \cite{griego1971almost} that such a convergence also holds in a pathwise sense. More precisely, the authors showed that there exists a common probability space in which the family of flip-flop (or uniform transport) processes $\{\mathcal{F}^\lambda\}_{\lambda>0}$ and a standard Brownian motion $\mathcal{B}$ are defined such that for any $T>0$
	\begin{align}
		\label{eq:strong1}
			\lim_{\lambda\rightarrow \infty}\sup_{0\le t\le T}\left| F^\lambda(t) - B(t) \right| = 0\quad \mbox{almost surely.}
	\end{align}
Whenever (\ref{eq:strong1}) holds, we say that $\mathcal{F}^\lambda$ converges \emph{strongly} to $\mathcal{B}$ as $\lambda\rightarrow\infty$. By applying the Bounded Convergence Theorem to (\ref{eq:weak1}), we trivially get that strong convergence implies weak convergence.
Strong convergence results also lead to stronger
approximations for diffusions and for solutions to stochastic differential equations (e.g. in \cite{gorostiza1979strong} and \cite{gorostiza1980rate2}, respectively). In \cite{gorostiza1980rate}, the rate of strong convergence of $\mathcal{F}^\lambda$ to $\mathcal{B}$ was computed. The key step in \cite{griego1971almost, gorostiza1980rate} consisted in embedding certain values of $\mathcal{F}^\lambda$ into $\mathcal{B}$ using the Skorokhod embedding theorem. 
In recent years, the study of flip-flop processes was generalised into different directions, most of which are based on the following. Consider a process $(\mathcal{R}, \mathcal{J}) = \{(R(t), J(t))\}_{t\ge 0}$ where the \emph{phase} process $\mathcal{J}$ is a Markov jump process on a finite state space $\mathcal{S}$, initial distribution $\boldsymbol{p}$, and intensity matrix $Q$, and the \emph{level} process $\mathcal{R}$ is defined by
%
%
\begin{align}
	\label{eq:Rt1}
	R(t) = \int_0^\infty \mu_{J(s)}\dd s + \int_0^\infty \sigma_{J(s)}\dd B(s),\quad t\ge 0,
\end{align}
with $\mu_i\in\mathds{R}$ and $\sigma_i \ge 0$ for $i\in\mathcal{S}$. 
If $\sigma_i = 0$ for all $i\in\mathcal{S}$, the process $(\mathcal{R}, \mathcal{J})$ is known as a \emph{stochastic fluid process} (SFP). If $\sigma_i>0$ for all $i\in\mathcal{S}$, then $(\mathcal{R}, \mathcal{J})$ is called a \emph{Markov modulated Brownian motion} (MMBM). In \cite{latouche2015morphing}, it is shown that there exists a family of SFPs that converges \emph{weakly} to any given MMBM. This result was later used to study MMBM with two boundaries in \cite{latouche2015fluid}, \cite{latouche2016feedback} and \cite{ahn2017time}, Markov-modulated sticky Brownian motion in \cite{latouche2017slowing}, and MMBM with temporary change of regime at zero in \cite{latouche2018markov}. 

In this paper, we construct a sequence of stochastic fluid processes which converges \emph{strongly} to an MMBM of any given parameters. More specifically, we prove the following result. 
%
%
\begin{theorem}
	\label{th:strongConvMMBM}
For any given $\boldsymbol{p}$, $Q$, $\{\mu_i\}_{i\in\mathcal{S}}$ and $\{\sigma_i > 0\}_{i \in\mathcal{S}}$, there exists a probability space $(\Omega, \mathscr{F}, \mathds{P})$ on which live an MMBM $(\mathcal{R}, \mathcal{J})=\{(R(t), J(t))\}_{t\ge 0}$ defined as in (\ref{eq:Rt1}) and a sequence of stochastic fluid models $\{(\mathcal{R}^n, \mathcal{J}^n)\}_{n \geq 0} =\{(R^n(t), J^n(t))\}_{t\ge 0}$, where $\mathcal{J}^n$ has the state space $\{+,-\}\times\mathcal{S}$, such that for all $T \geq 0$ 
\begin{align} 
	\lim_{n\rightarrow \infty} \sup_{0\le s\le T} \left| R(s) - R^n(s) \right| & = 0 \quad \mbox{a.s.}, \\ 
	\lim_{n\rightarrow\infty}\pi_2(J^n(T)) & = J(T) \quad \mbox{a.s.}, 
\end{align} 
where $\pi_2:\{+,-\}\times\mathcal{S}\mapsto \mathcal{S}$ denotes the second-coordinate projection.
\end{theorem}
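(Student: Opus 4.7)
The plan is to build, on a single probability space, the MMBM $(\mathcal{R},\mathcal{J})$ together with, for every $n$, an independent flip-flop process that is pathwise-coupled to the driving Brownian motion, and then turn these ingredients into stochastic fluid processes of the required form. First I would construct on some $(\Omega,\mathscr{F},\mathds{P})$ a standard Brownian motion $\mathcal{B}$ and, independently, a Markov jump process $\mathcal{J}$ on $\mathcal{S}$ with initial distribution $\boldsymbol{p}$ and intensity matrix $Q$; the MMBM $\mathcal{R}$ defined by (\ref{eq:Rt1}) then lives on this space.

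Next I would enlarge the space so as to support, for every $n\ge 1$, a flip-flop process $\varphi^n$ on $\{+,-\}$ with intensity $n$, independent of $\mathcal{J}$ and pathwise coupled to $\mathcal{B}$ through the Skorokhod-embedding construction of \cite{griego1971almost} (as sharpened in \cite{gorostiza1980rate}), in such a way that $F^n(t)=\int_0^t r(\varphi^n(s))\,\dd s$ (with $r(\pm)=\pm\sqrt n$ as in (\ref{eq:flipflop1})) satisfies $\sup_{0\le s\le T}|F^n(s)-B(s)|\to 0$ almost surely for every $T>0$.

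With these $\varphi^n$ at hand, I would define $\mathcal{J}^n=\{(\varphi^n(t),J(t))\}_{t\ge 0}$, which, being the product of two independent Markov jump processes on $\{+,-\}$ and $\mathcal{S}$, is itself Markov on $\{+,-\}\times\mathcal{S}$. Assigning to each state $(\epsilon,i)$ the fluid rate $c^n(\epsilon,i)=\mu_i+\sigma_i\, r(\epsilon)$ and setting $R^n(t)=\int_0^t c^n(J^n(s))\,\dd s$ produces a stochastic fluid process for which
\[
R^n(t)-R(t)=\int_0^t \sigma_{J(s)}\,\dd F^n(s)-\int_0^t \sigma_{J(s)}\,\dd B(s),
\]
since $\dd F^n(s)=r(\varphi^n(s))\,\dd s$ by definition of $F^n$.

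Because $\mathcal{J}$ has almost surely only finitely many jumps $0=T_0<T_1<\cdots<T_{K_T}$ in $[0,T]$, the integrand $\sigma_{J(\cdot)}$ is piecewise constant, and a summation-by-parts along these epochs gives a deterministic bound of the form
\[
\sup_{0\le t\le T}|R^n(t)-R(t)|\le (1+2K_T)\max_{i\in\mathcal{S}}\sigma_i\cdot \sup_{0\le s\le T}|F^n(s)-B(s)|,
\]
from which the first claim follows via the coupling built in the previous step. The second claim is immediate since $\pi_2(J^n(T))=J(T)$ for every $n$ by construction. The main obstacle is the coupling step: one must build, on a common probability space, Poisson processes driving each $\varphi^n$ that are all individually independent of $\mathcal{J}$ yet jointly tied to $\mathcal{B}$ tightly enough to yield almost-sure uniform approximation on compacts. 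This is where the Skorokhod-embedding machinery of \cite{griego1971almost,gorostiza1980rate} does the heavy lifting; once it is in place, the remaining work amounts to pathwise manipulations of Stieltjes integrals against piecewise-constant integrands, and a careful accounting of the embedding errors should yield the quantitative rate advertised in the abstract.
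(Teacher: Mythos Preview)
Your argument is correct for Theorem~\ref{th:strongConvMMBM}, but it follows a genuinely different route from the paper's. You treat the strong approximation $F^n\to B$ of \cite{griego1971almost,gorostiza1980rate} as a black box and then \emph{modulate} it by an independent $\mathcal{J}$, setting $\mathcal{J}^n=(\varphi^n,\mathcal{J})$ as a product of independent chains and using summation by parts against the piecewise-constant $\sigma_{J(\cdot)}$. The paper instead builds $\mathcal{J}$ by uniformisation from a Poisson process $\mathcal{M}^0$, refines it to nested Poisson processes $\mathcal{M}^n$, and uses the Wiener--Hopf factorisation of Brownian motion between consecutive Poisson epochs to define $\mathcal{R}^n$ so that it exactly visits the values $R(\theta^n_k)$ and $\min_{[\theta^n_k,\theta^n_{k+1}]}R$; its $\mathcal{J}^n$ is not a product chain (the $\mathcal{S}$-coordinate can only jump at $+\to-$ transitions), and $\pi_2(\mathcal{J}^n)$ is a time-distorted copy of $\mathcal{J}$ rather than $\mathcal{J}$ itself, so Part~(ii) requires real work.

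Your construction is more economical and makes the phase statement trivial, but two of your closing remarks overreach. First, your scheme cannot recover the rate $o(n^{-1/2}\log n)$ advertised in the abstract: the bound $\sup|R^n-R|\le(1+2K_T)\sigma_{\max}\sup|F^n-B|$ inherits whatever rate the Skorokhod-embedding coupling delivers, and \cite{gorostiza1980rate} gives only $o(n^{-1/2}(\log n)^{5/2})$; the improvement in the paper comes precisely from \emph{replacing} Skorokhod embedding by the Poissonian-observation/Wiener--Hopf argument (cf.\ Remark~1.3). Second, the paper's specific embedding of local minima (equations \eqref{eq:RnS1}--\eqref{eq:RnS2}) is what drives the first-passage application in Section~\ref{sec:applications}; your product-chain $\mathcal{J}^n$ with rates $\mu_i\pm\sigma_i\sqrt{n}$ does not share that structure, so you would not obtain Proposition~\ref{prop:tauxtaunx1} or the exact identification of $U_n$ with the MMBM downcrossing generator.
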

%
%
In fact, Theorem \ref{th:strongConvMMBM} is a consequence of the following result which concerns the rate of the strong convergence of $\{(\mathcal{R}^n, \mathcal{J}^n)\}_{n \geq 0}$ to $(\mathcal{R}, \mathcal{J})$.

\begin{theorem}
	\label{th:rate}
Fix $T\in[0,1)$. In the probability space $(\Omega, \mathscr{F}, \mathds{P})$ of Theorem \ref{th:strongConvMMBM}, 
\begin{itemize} 
	\item[(i)] for each $q>0$ there exists a constant $\alpha=\alpha(q)>0$ such that
\begin{align}
	\label{eq:rate2}
\mathds{P}\left(\sup_{0 \le s\le T} \left|R(s) - R^n(s)\right| > \alpha \varepsilon_n\right) = o(n^{-q})\quad\mbox{as } n\rightarrow\infty, 
\end{align} 
with $\varepsilon_n := n^{-1/2} \log(n)$, where $o(g(n))$ for $g:\mathds{N} \mapsto \mathds{R}_+$ denotes a function $f:\mathds{N} \mapsto \mathds{R}$ such that $\lim_{n\rightarrow\infty} f(n)/g(n)= 0$;

	\item[(ii)] furthermore, the process $\{\pi_2(J^n(t))\}_{t\ge 0}$ converges in an a.s. local uniform sense to $\{J(t)\}_{t\ge 0}$; that is,
	\begin{equation}
		\label{eq:rateJ1}
		\lim_{\rho\downarrow 0}\left[\limsup_{n \rightarrow \infty} \left( \sup_{s\in(T-\rho,T+\rho)} d \left(\pi_2(J^n(s)), J(s)\right)\right)\right] =0\quad\mbox{a.s.,}
	\end{equation}
where $d(\cdot,\cdot)$ denotes the discrete metric in $\mathcal{S}$.
\end{itemize} 
\end{theorem}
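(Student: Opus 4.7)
The plan is to exploit the conditional structure of the MMBM: given the phase trajectory $\mathcal{J}$, the level process $\mathcal{R}$ reduces to a concatenation of independent Brownian pieces with drift $\mu_i$ and diffusion $\sigma_i$, while by construction the approximating $\mathcal{R}^n$ becomes the concatenation of flip-flop pieces of some rate $\lambda_n \asymp n$ on the same holding intervals, with the same drifts. On the common probability space of Theorem \ref{th:strongConvMMBM} I would couple the two within each holding interval by Skorokhod-embedding the flip-flop into the driving Brownian segment, in the spirit of \cite{griego1971almost}, so that phase changes of $\mathcal{J}^n$ at the coarse scale match those of $\mathcal{J}$ exactly and the only discrepancy occurring is in the embedded diffusion part.

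The technical heart is a per-block rate bound: for each $q>0$ there is $c=c(q)>0$ such that, for a Skorokhod-embedded flip-flop of rate $\lambda_n$ coupled to $\sigma_i B(\cdot)$ on a window of length $\le 1$,
\[
\mathds{P}\Bigl(\sup_{0\le s\le 1}\bigl|\sigma_i F^{\lambda_n}(s) - \sigma_i B(s)\bigr| > c\varepsilon_n \Bigr) = o(n^{-q}).
\]
This improves the $(\log n)^{5/2}$ factor of \cite{gorostiza1980rate} to $\log n$. To obtain it I would decompose the uniform error into (a) the centred sum $\sum_k(\tau_k-\mathds{E}\tau_k)$ of the embedding times, which is an i.i.d.\ sum of sub-gamma increments and can be controlled by a Bernstein-type inequality at moderate-deviation scale; and (b) the maximal Brownian oscillation over one typical embedding interval of length $O(1/\lambda_n)$, bounded via standard modulus-of-continuity tail estimates. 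The cleaner $\log n$ rate comes from first conditioning on the high-probability event that the number of embedded jumps in $[0,1]$ lies in $[\lambda_n/2,2\lambda_n]$, whose complement has probability $o(n^{-q})$ for every $q$ by Chernoff, so one avoids the pathwise worst-case count that generated the extra logarithmic factors in \cite{gorostiza1980rate}.

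Given the per-block bound, part (i) follows by assembling the holding intervals of $\mathcal{J}$ on $[0,T]\subset[0,1)$: the number of phase changes is Poisson and exceeds $(\log n)^2$ with probability $o(n^{-q})$ for any $q$, so a union bound over blocks multiplies the per-block tail only by a polylogarithmic factor and absorbs into the constant $\alpha$. For part (ii), the a.s.\ càdlàg nature of $\mathcal{J}$ yields a (random) $\rho_0(\omega)>0$ with no $\mathcal{J}$-jump in $(T-\rho_0,T+\rho_0)$; on this window $J(s)$ is constant, and by the coupling the only way $\pi_2(J^n(s))$ can differ from $J(s)$ is for a Skorokhod stopping time attached to an adjacent $\mathcal{J}$-jump to reach into $(T-\rho_0,T+\rho_0)$, which fails for all $n$ past an $\omega$-dependent threshold. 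Hence for small $\rho<\rho_0$ the inner supremum in (\ref{eq:rateJ1}) is eventually identically zero.

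The main obstacle is the sharpened flip-flop rate itself: stripping the spurious $(\log n)^2$ factor from \cite{gorostiza1980rate} demands a careful interplay between moderate-deviation estimates on the embedding times and Brownian-modulus control on the microscopic scale $1/\lambda_n$, neither of which is immediate from the classical Skorokhod coupling; everything else (block union bound, phase alignment, treatment of the drift part) is essentially bookkeeping around this estimate.
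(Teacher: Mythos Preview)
Your proposal does not address the theorem as stated. Theorem~\ref{th:rate} concerns the \emph{specific} probability space and the \emph{specific} sequence $(\mathcal{R}^n,\mathcal{J}^n)$ built in Section~\ref{sec:ProofStrongConvMMBM}, and that construction is not a Skorokhod embedding at all: it is a Wiener--Hopf/Poissonian-observation coupling. A Poisson process $\mathcal{M}^n$ of rate $\lambda_n/2$ (with $\lambda_n=2n^2$, not $\lambda_n\asymp n$) is laid down independently of $\mathcal{B}$; between consecutive arrivals $\theta_k^n$ the Brownian piece is summarised by the pair $(L_k^n,H_k^n)$ from the Wiener--Hopf factorisation; and $\mathcal{R}^n$ is the piecewise-linear process hitting those same values at times $\chi_k^n$. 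The core identity is $R^n(\chi_k^n)=R(\theta_k^n)$ exactly, and the whole proof is organised around controlling the time mismatches $|\chi_k^n-k/n^2|$ and $|\theta_k^n-k/n^2|$ via Erlang moment bounds plus Doob's $L_p$ inequality, together with a modulus-of-continuity bound for $\mathcal{R}$ on windows of length $\delta_n=O(n^{-1}\log n)$. Your per-holding-interval Skorokhod scheme, with its random stopping times embedded in the Brownian path, is simply a different object; arguing about it would prove a different theorem, not this one.

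Two further points. First, even within your alternative scheme, the one step you identify as ``the main obstacle''---removing the extra $(\log n)^{3/2}$ from \cite{gorostiza1980rate}---is precisely the content of the result, and you do not actually carry it out; the paper's improvement comes from abandoning Skorokhod embedding, not from a sharper analysis of it (see Remark~1.3). Second, your claim that ``phase changes of $\mathcal{J}^n$ at the coarse scale match those of $\mathcal{J}$ exactly'' is false for the paper's construction: the $k$th transition of $\pi_2(\mathcal{J}^n)$ occurs at some $\chi_k^n$, while the corresponding transition of $\mathcal{J}$ occurs at $\theta_k^n$, and $\chi_k^n\neq\theta_k^n$ in general. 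Part~(ii) is proved precisely by showing $\max_k|\chi_k^n-\theta_k^n|$ is eventually below any fixed $\rho$ via the $C_\chi^n\cup C_\theta^n$ estimate and Borel--Cantelli, not by an exact match.
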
  
The case $T\in[0,1)$ of Theorem \ref{th:strongConvMMBM} is a consequence of Theorem \ref{th:rate} and the Borel-Cantelli lemma, with the case $T\ge 1$ following by elementary time-scaling arguments.
%

\begin{Remark} The proof of Theorem \ref{th:rate} is inspired by the work of \cite{gorostiza1980rate}, where we replace the use of the Skorokhod embedding theorem with a Poissionian observations argument. Our approach yields tighter and simpler bounds, which ultimately enables us to obtain a faster rate of convergence than the one of \cite{gorostiza1980rate} (which was proportional to $n^{-1/2}(\log(n))^{5/2}$) when reduced to the case of the standard Brownian motion.
\end{Remark} 

This paper is structured as follows. In Section \ref{sec:ProofStrongConvMMBM} we construct $(\Omega, \mathscr{F}, \mathds{P})$ and describe the distributional characteristics of each stochastic fluid process $(\mathcal{R}^n, \mathcal{J}^n)$, for $n\ge 0$. We compute in Section \ref{sec:rate} the rate of convergence of $\mathcal{R}^n$ to $\mathcal{R}$, from which the proof of Theorem \ref{th:rate}, and thus that of Theorem \ref{th:strongConvMMBM}, follows. Finally, in Section \ref{sec:applications} we develop some implications of Theorem \ref{th:strongConvMMBM} regarding the downcrossing probabilities of $\mathcal{R}$ and $\mathcal{R}^n$; in particular, we exhibit a new link between the solutions of certain Riccati and quadratic matrix equations.
%


\section{Construction of $\{(\mathcal{R}^n, \mathcal{J}^n)\}_{n \geq 0}$.}
	\label{sec:ProofStrongConvMMBM}
First, we construct the probability space suitable to prove Theorems \ref{th:strongConvMMBM} and \ref{th:rate}. Fix $\boldsymbol{p}$, $Q$, $\{\mu_i\}_{i \in\mathcal{S}}$ and $\{\sigma_i >0\}_{i\in\mathcal{S}}$ of Theorem \ref{th:strongConvMMBM}. Let $\lambda_0=2\max_{i\in\mathcal{S}}|Q_{ii}|$,
%
%
and consider a sequence $\{\lambda_n\}_{n\ge 1}$ such that $\lambda_n\ge \lambda_{n-1}$ for $n\ge 1$ and $\lim_{n\rightarrow\infty}\lambda_n = \infty$. Let $(\Omega, \mathscr{F}, \mathds{P})$ be a probability space that supports:
\begin{itemize}
	\item a standard Brownian motion $\mathcal{B}=\{B(t)\}_{t\ge 0}$,
	\item a Poisson process $\mathcal{M}^0=\{M^0(t)\}_{t\ge 0}$ of rate $\lambda_0/2$,
	\item a sequence of Poisson processes $\{\mathcal{\widetilde{M}}^n\}_{n\ge 1}$, where $\mathcal{\widetilde{M}}^n$ has rate $(\lambda_{n}-\lambda_{n-1})/2$,
	\item a discrete-time Markov chain $\mathcal{X}^0=\{X^0(k)\}_{k\ge 0}$ with state space $\mathcal{S}$, initial distribution $\boldsymbol{p}$, and transition probability matrix $P_0:=I + (\lambda_0/2)^{-1}Q$,
\end{itemize}
%
%
%
with $\mathcal{B}$, $\mathcal{M}^0$, $\{\mathcal{\widetilde{M}}^n\}_{n\ge 1}$, and $\mathcal{X}^0$ being independent of each other. All the elements stated in Theorem \ref{th:strongConvMMBM} and of the whole manuscript will be constructed in $(\Omega, \mathscr{F}, \mathds{P})$. To construct $(\mathcal{R},\mathcal{J})$ on $(\Omega, \mathscr{F}, \mathds{P})$, let 
\begin{equation}
	\label{eq:Jtunif1} 
	J(t)=X^0(M^0(t)),\quad t\ge 0.
\end{equation}
The uniformization method 
implies that $\mathcal{J}=\{J(t)\}_{t\ge 0}$ is a Markov jump process with initial distribution $\boldsymbol{p}$ and intensity matrix $Q$. Let $\mathcal{R}=\{R(t)\}_{t\ge 0}$ be defined as on (\ref{eq:Rt1}), so that $(\mathcal{R},\mathcal{J})$ corresponds to a Markov-modulated Brownian motion. 

Next, for each $n \geq 0$, we construct the process $(\mathcal{R}^n,\mathcal{J}^n)$ as follows. Define the arrival process $\mathcal{M}^{n} = \{M^n(t)\}_{t \geq 0}$ to be the superposition of $\{\mathcal{M}^0, \mathcal{\widetilde{M}}^{1}, \mathcal{\widetilde{M}}^{2},\dots, \mathcal{\widetilde{M}}^{n}\}$. Then, $\mathcal{M}^n$ is itself a Poisson process of intensity 
\begin{align*} 
	\lambda_0/2 + \sum_{\ell=1}^n(\lambda_\ell - \lambda_{\ell-1})/2 = \lambda_{n}/2,
\end{align*}  
and its arrival epochs form a subset of the arrival epochs of $\mathcal{M}^{n + m}$ for any $m\ge 0$. In other words, $\{\mathcal{M}^n\}_{n\ge 0}$ is a sequence of Poisson process with nested time epochs whose new arrivals, as $n$ increases, are created independently of the existing ones. Let us emphasize that choosing to have Poissonan observations with rates $\lambda_{n}/2$ allows a direct comparison of our construction with the models of \cite{gorostiza1980rate} and of \cite{ramaswami2013fluid} in the special case of flip-flop approximations to a standard Brownian motion.

Intuitively, our aim is to construct $(\mathcal{R}^n,\mathcal{J}^n)$ in such a way that $\mathcal{R}^n$ visits the levels of $\mathcal{R}$ inspected at the arrival epochs of the Poisson process $\mathcal{M}^n$. To that end, we employ the well-known Wiener-Hopf factorisation for the Brownian motion with drift; see \cite[Corollary 2.4.10]{bladt2017matrix} for a proof.
%
%
\begin{theorem}[Wiener-Hopf factorisation for BM]
	\label{th:WHBM1}
 Let $\{W_t\}_{t\ge 0}$ be a Brownian motion with variance $\sigma^2>0$, drift $\mu$, and initial point $W_0=0$. Let $S$ be a stopping time and let $T \sim \mbox{exp}(\beta)$, independent of $\{W_t\}_{t\ge 0}$. Then, $W_S - \min_{0\le t\le T} W_{S + t}$ and $W_{S+T} - \min_{0\le t\le T} W_{S + t}$ are independent and exponentially distributed with rates
\[ 
	\omega = \sqrt{\frac{\mu^2}{\sigma^4} + \frac{2\beta}{\sigma^2}} + \frac{\mu}{\sigma^2}\quad\mbox{and}\quad\eta = \sqrt{\frac{\mu^2}{\sigma^4} + \frac{2\beta}{\sigma^2}} - \frac{\mu}{\sigma^2}, \quad\mbox{respectively.}
\]
\end{theorem}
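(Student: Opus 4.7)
My plan is to first reduce the claim to the case $S = 0$ via the strong Markov property, and then compute the joint density of $(-\underline{W}_T, W_T - \underline{W}_T)$ directly, where $\underline{W}_T := \min_{0\le t\le T} W_t$, verifying that it factorises as a product of two exponential densities with the stated rates. For the reduction, the strong Markov property of $W$ at the stopping time $S$ gives that $\tilde W_t := W_{S+t}-W_S$ is a Brownian motion with the same drift $\mu$, variance $\sigma^2$, and initial value $0$, independent of $\mathscr{F}_S$; since $T$ is independent of $W$ (hence of $\mathscr{F}_S$), $\tilde W$ and $T$ remain independent. Rewriting the quantities of interest as $-\min_{0\le t\le T}\tilde W_t$ and $\tilde W_T - \min_{0\le t\le T}\tilde W_t$, the problem reduces to the case $S = 0$, with $W$ starting at $0$ and $T \sim \exp(\beta)$ independent of $W$.

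Set $L := -\underline{W}_T$ and $Y := W_T - \underline{W}_T$. The classical joint density of $(W_t, \underline{W}_t)$ at a fixed $t$, obtained via the reflection principle combined with a Girsanov change of measure in the drift, is
\[
f_{W_t, \underline{W}_t}(w, m) = \frac{2(w-2m)}{\sigma^3\sqrt{2\pi t^3}}\exp\!\left(\frac{\mu w}{\sigma^2} - \frac{\mu^2 t}{2\sigma^2} - \frac{(w-2m)^2}{2\sigma^2 t}\right),
\]
for $m \le 0$ and $m \le w$. The idea is to integrate against the exponential density $\beta e^{-\beta t}$ of $T$; the $t$-integral reduces via the classical Laplace identity
\[
\int_0^\infty \frac{a}{\sqrt{2\pi t^3}}\,e^{-a^2/(2t) - \alpha t}\,dt = e^{-a\sqrt{2\alpha}},\qquad a,\alpha>0,
\]
which is nothing but the Laplace transform of the first-passage-time density for standard Brownian motion.

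Performing the invertible change of variables $(w,m) \mapsto (l, y) = (-m, w-m)$ and using the two algebraic identities $\omega\eta = 2\beta/\sigma^2$ and $\omega - \eta = 2\mu/\sigma^2$ (both immediate from the definitions of $\omega$ and $\eta$), the resulting density should collapse to $f_{L,Y}(l,y) = \omega\eta \, e^{-\omega l - \eta y}$ on $[0,\infty)^2$, which is exactly the joint density of two independent exponentials with rates $\omega$ and $\eta$; this simultaneously establishes independence and the marginal distributions, completing the proof. I expect the main obstacle to be purely clerical --- carefully tracking the $\sigma$-factors through the Brownian scaling of the fixed-time density and verifying that the exponent rearranges cleanly to $-\omega l - \eta y$ --- rather than any conceptual difficulty; no ingredient beyond the reflection principle, Girsanov's theorem, and the Laplace identity above should be required.
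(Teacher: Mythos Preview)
Your proposal is correct. The paper does not actually prove this theorem: it is stated with the parenthetical remark ``see \cite[Corollary 2.4.10]{bladt2017matrix} for a proof'' and no further argument. So there is no ``paper's own proof'' to compare against, and your self-contained derivation goes beyond what the authors supply.

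On the substance: your reduction to $S=0$ via the strong Markov property is standard and correct (implicitly assuming $S$ is an a.s.\ finite stopping time for the natural filtration of $W$, which is the only sensible reading of the statement). The core computation is also right. After the change of variables $(w,m)\mapsto(l,y)=(-m,w-m)$ one has $w-2m=l+y$ and $w=y-l$, so with $a=(l+y)/\sigma$ and $\alpha=\beta+\mu^2/(2\sigma^2)$ the $t$-integral collapses via your Laplace identity to $e^{-(l+y)\gamma}$ where $\gamma=\sqrt{\mu^2/\sigma^4+2\beta/\sigma^2}$. The remaining prefactor $\tfrac{2\beta}{\sigma^2}\exp\!\big(\mu(y-l)/\sigma^2\big)$ combines with this to give exactly $\omega\eta\,e^{-\omega l-\eta y}$, using $\omega\eta=2\beta/\sigma^2$ and $\omega-\eta=2\mu/\sigma^2$ as you note. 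The clerical tracking of $\sigma$-factors works out cleanly; nothing is missing.
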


Theorem \ref{th:WHBM1} implies that, restricted to an exponentially distributed time interval, we can track both the value of the minimum over this period and that at the right endpoint of a Brownian motion with drift. 
Let $\{T_k^n\}_{k\ge 1}$ be the interarrival times of the process $\mathcal{M}^n$, and define $\theta_0^n := 0$, 
\begin{align}\label{eq:thetakn1}
	\theta_k^n:= \sum_{j=1}^k T^n_j, \quad k \geq 0;
\end{align}
thus $\{\theta_k^n\}_{n \geq 0}$ are the arrival epochs of $\mathcal{M}^n$. See Figure~\ref{fig:MnJt} for an illustration. 

\begin{figure}[!ht]
\begin{center} 
\includegraphics[scale=0.7]{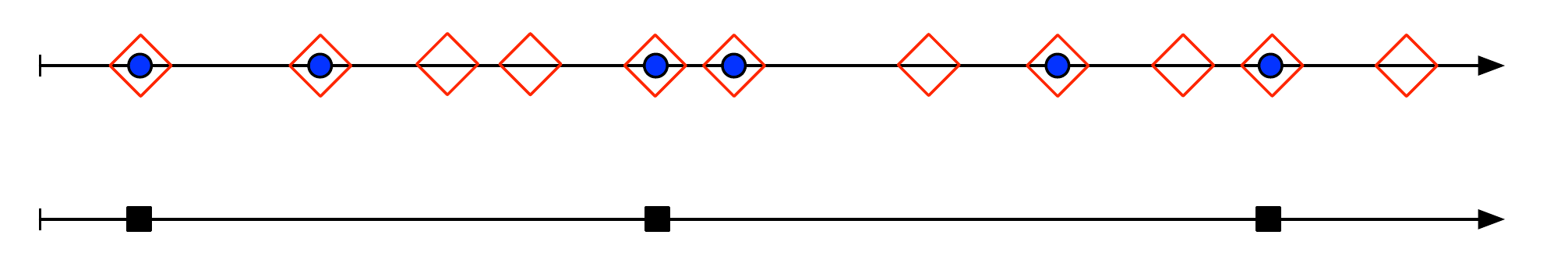} 
\put(-399, 2){\makebox(0,0)[l]{0}}
\put(-399, 41){\makebox(0,0)[l]{0}}
\put(-374, 36){\makebox(0,0)[l]{$\theta^{0}_1$}}
\put(-327, 36){\makebox(0,0)[l]{$\theta^{0}_2$}}
\put(-239, 36){\makebox(0,0)[l]{$\theta^{0}_3$}}
\put(-220, 36){\makebox(0,0)[l]{$\theta^{0}_4$}}
\put(-135, 36){\makebox(0,0)[l]{$\theta^{0}_5$}}
\put(-79, 36){\makebox(0,0)[l]{$\theta^{0}_6$}}
\put(-374, 66){\makebox(0,0)[l]{$\theta^{n}_1$}}
\put(-327, 66){\makebox(0,0)[l]{$\theta^{n}_2$}}
\put(-292, 66){\makebox(0,0)[l]{$\theta^{n}_3$}}
\put(-272, 66){\makebox(0,0)[l]{$\theta^{n}_4$}}
\put(-239, 66){\makebox(0,0)[l]{$\theta^{n}_5$}}
\put(-220, 66){\makebox(0,0)[l]{$\theta^{n}_6$}}
\put(-168, 66){\makebox(0,0)[l]{$\theta^{n}_7$}}
\put(-135, 66){\makebox(0,0)[l]{$\theta^{n}_8$}}
\put(-105, 66){\makebox(0,0)[l]{$\theta^{n}_9$}}
\put(-79, 66){\makebox(0,0)[l]{$\theta^{n}_{10}$}}
\put(-45, 66){\makebox(0,0)[l]{$\theta^{n}_{11}$}}
\caption{Blue dots correspond to arrivals $\{\theta^{0}_k\}_{k \geq 0}$ of $\mathcal{M}^0$, red diamonds the arrivals $\{\theta^n_k\}_{k \geq 0}$ of $\mathcal{M}^n$, black squares the jump epochs of $\mathcal{J}$. As $\mathcal{J}$ is given by \eqref{eq:Jtunif1}, its jump epochs form a subset of the arrival times of $\mathcal{M}^0$.} 
\label{fig:MnJt} 
\end{center} 
\end{figure}
%

%
As $\{\theta_k^n\}_{n \geq 0}$ contain all the arrival epochs of $\mathcal{M}^0$, Equation (\ref{eq:Jtunif1}) implies that $\{J(t)\}_{t\ge 0}$ remains constant on each interval $[\theta_k^n, \theta_{k+1}^n)$, $k\ge 0$. Consequently, given $J({\theta_k^n})=i$ on $[\theta_k^n, \theta_{k+1}^n)$, $\{R(t)\}_{t\ge 0}$ behaves like a Brownian motion with drift $\mu_i$ and variance $\sigma^2_i$. Thus, by sequentially using the Wiener-Hopf factorisation between arrival epochs of $\mathcal{M}^n$, we can keep track of $\{ R(\theta_k^n)\}_{k\ge 0}$ and of $\{\min_{\theta_k^n\le t\le \theta_{k+1}^n} R(t)\}_{k\ge 0}=\{\min_{0\le t\le T_{k+1}^n} R(\theta_k^n + t)\}_{k\ge 0}$ in a simple manner, which we explain in detail next. 
%

For each $k\ge 0$, define the random variables%
%
%
%
\begin{align*} 
X^n(k) & := J(\theta_k^n), \\
L^{n}_{k+1} & :=  R(\theta_{k}^n) - \min_{0\le t\le T_{k+1}^n} R(\theta_k^n + t),\\
H^{n}_{k+1} & :=  R(\theta_{k+1}^n) - \min_{0\le t\le T_{k+1}^n} R(\theta_k^n + t). 
\end{align*} 
By Theorem \ref{th:reverseUnif} in the Appendix, $\mathcal{X}^n=\{X^n(k)\}_{k\ge 0}$ is a discrete-time Markov chain with transition probability matrix $P_n:= I + (\lambda_n/2)^{-1}Q$. The strong Markov property of $\{(R(t), J(t))\}_{t\ge 0}$ and Theorem \ref{th:WHBM1} imply that, conditioned on $\mathcal{X}^n$, $\{L^{n}_{k+1}\}_{k\ge 0}$ is a collection of independent random variables. More specifically, given $X^n_k=i$, $L^{n}_{k+1}$ is exponentially distributed with rate
\[
	\omega_i^n := \sqrt{\frac{\mu^2_i}{\sigma^4_i} + \frac{\lambda_n}{\sigma^2_i}} + \frac{\mu_i}{\sigma^2_i}.
\]
Similarly, $\{H^{n}_{k+1}\}_{k\ge 0}$ is a collection of conditionally 
independent random variables for which, given $X^n_k=i$, $H^{n}_{k+1}$ is exponentially distributed with rate
\begin{align*} 
	\eta_i^n := \sqrt{\frac{\mu^2_i}{\sigma^4_i} + \frac{\lambda_n}{\sigma^2_i}} - \frac{\mu_i}{\sigma^2_i}.
\end{align*} 
Moreover, $\{L^{n}_{k+1}\}_{k\ge 0}$ is conditionally independent of $\{H^{n}_{k+1}\}_{k\ge 0}$. Note that $\{L^{n}_{k+1}\}_{k\ge 0}$ and $\{H^{n}_{k+1}\}_{k\ge 0}$ completely describe $\{ R(\theta_k^n)\}_{k\ge 0}$ and $\{\min_{\theta_k^n\le t\le \theta_{k+1}^n} R(t)\}_{k\ge 0}$, in the sense that for all $k\ge 0$,
\begin{align}
	\label{eq:sumH1}
R(\theta_k^n)& = \sum_{j=1}^k \left( -L^{n}_{j} +H^{n}_{j}\right), \quad \mbox{and}\\
	\label{eq:sumH2}
	\min_{\theta_k^n\le t\le \theta_{k+1}^n} R(t)& = \sum_{j=1}^k \left(-L^{n}_{j} +H^{n}_{j}\right) - L^{n}_{k+1}.
\end{align}
For all $k\ge 1$, if $X^n(k-1) = i$, define  
\begin{align*}
\widehat{L}^{n}_{k} & := \lambda_n^{-1}\omega_i^n L^{n}_{k} \quad \mbox{ and } \quad \widehat{H}^{n}_{k}  := \lambda_n^{-1}\eta_i^n H^{n}_{k}. 
\end{align*}
Then, the collections $\{\widehat{L}^{n}_{k}\}_{k\ge 1}$ and  $\{\widehat{H}^{n}_{k}\}_{k\ge 1}$ are i.i.d. random variables exponentially distributed with parameter $\lambda_n$. Let $\chi_n^0 := 0$, and define for all $k\ge 1$ 
\begin{align*} 
	\quad\chi_k^n:=\sum_{j=1}^k \left(\widehat{L}^{n}_{j} + \widehat{H}^{n}_{j}\right).
\end{align*} 
Let $\mathcal{J}^n = \{J^n(t)\}_{t\ge 0}$ be the process with state space $\{+,-\}\times\mathcal{S}$ defined by
\begin{align*} 
	J^n(t) =\left\{\begin{array}{cl}(-,i) 
	& \mbox{if } t\in[\chi^n_k,\chi^n_k + \widehat{L}^{n}_{k+1})\mbox{ for some $k\ge 0$ and }X_k^n = i,\\
\vspace*{-0.2cm} \\
(+,i) & \mbox{if } t\in[\chi^n_k + \widehat{L}^{n}_{k+1}, \chi^n_{k+1})\mbox{ for some $k\ge 0$ and }X_k^n = i.
	\end{array}\right.
\end{align*} 
	Figure~\ref{fig:SandChi} shows a sample path of $\mathcal{J}^n$ with $\mathcal{S} = \{1, 2, 3\}$.
\begin{figure}[h]
	\centering
	\includegraphics[scale=0.5]{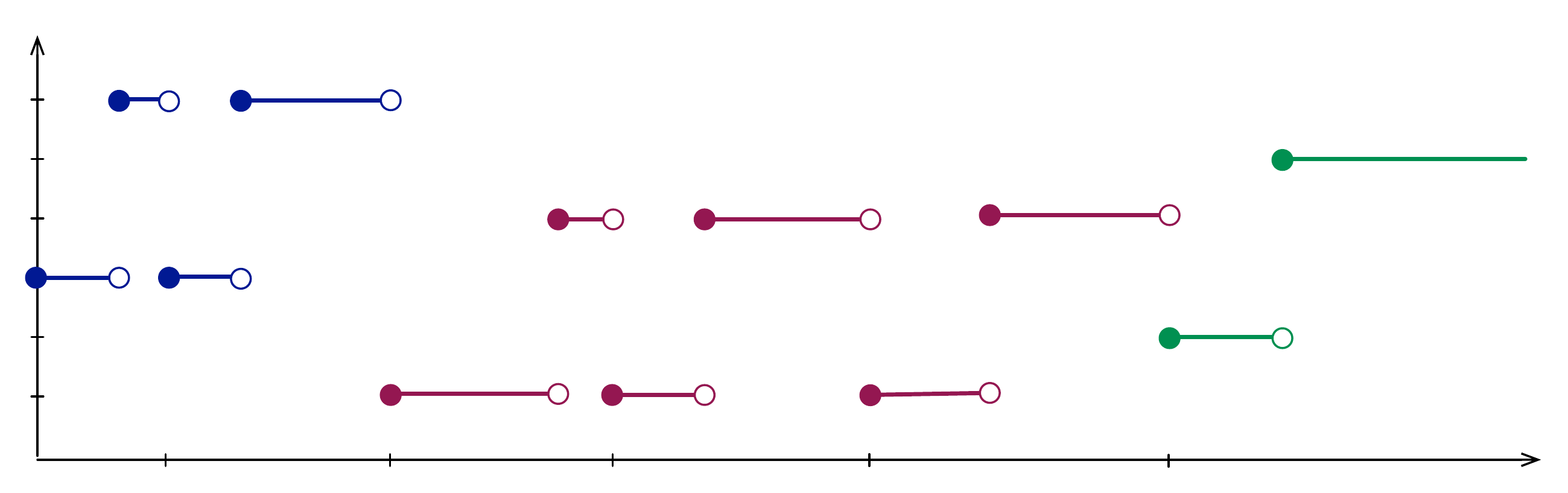}
		 \put(-382,117){\makebox(0,0)[l]{$J^n(t)$}}
		 \put(-3,8){\makebox(0,0)[l]{$t$}}
		 \put(-372,0){\makebox(0,0)[l]{$0$}}
		  \put(-340,0){\makebox(0,0)[l]{$\chi^n_1$}}
		  \put(-285,0){\makebox(0,0)[l]{$\chi^n_2$}}
		  \put(-233,0){\makebox(0,0)[l]{$\chi^n_3$}}
	           \put(-172,0){\makebox(0,0)[l]{$\chi^n_4$}}
		  \put(-100,0){\makebox(0,0)[l]{$\chi^n_5$}}
		  \put(-400,22){\makebox(0,0)[l]{$(-,3)$}} 
		  \put(-400,37){\makebox(0,0)[l]{$(-,2)$}} 
		\put(-400,51){\makebox(0,0)[l]{$(-,1)$}} 
		\put(-400,65){\makebox(0,0)[l]{$(+,3)$}} 
		 \put(-400,79){\makebox(0,0)[l]{$(+,2)$}} 
		 \put(-400,94){\makebox(0,0)[l]{$(+,1)$}} 
\caption{A sample path of $\{J^n(t)\}$ with $\mathcal{S} = \{1,2, 3\}$.	\label{fig:SandChi} }
\end{figure}

The process $\mathcal{J}^n$ jumps alternately between $\{-\}\times\mathcal{S}$ and $\{+\}\times\mathcal{S}$ with intensity given by $\lambda_n$; furthermore, changes in its second coordinate, which occur according to $P_n$, are only possible at jumps instants from $\{+\}\times\mathcal{S}$ to $\{-\}\times\mathcal{S}$. Thus, $\mathcal{J}^n$ is a Markov jump process with state-space $\{+,-\}\times\mathcal{S}$ (ordered lexicographically), initial distribution $(\bm{0},\bm{p})$ and intensity matrix given by
\[
	\left[\begin{array}{rr} 
	-\lambda_n I & \lambda_n P_n\\
\lambda_n I & -\lambda_n I	
\end{array}\right] = \left[\begin{array}{rr} -\lambda_n I & 2 Q + \lambda_n I\\
\lambda_n I& -\lambda_n I					
	\end{array}\right]. 
\]
%
%
%
Note that the sequence of states in $\mathcal{S}$ visited by $\pi_2(\mathcal{J}^n)$ coincides with that of $\mathcal{J}$, or more precisely, 
\begin{equation}\label{eq:sameJandJn}\pi_2({J}^n(\chi^n_k))=J(\theta^n_k)\quad\mbox{for all}\quad k\ge 0.\end{equation}
Also notice the jumps of $\pi_2(\mathcal{J}^n)$ can occur only at $\{\chi^n_k\}_{k\ge 0}$ while the jumps of $\mathcal{J}$ can occur only at $\{\theta^n_k\}_{k\ge 0}$. In general, $\{\chi^n_k\}_{k\ge 0}\neq\{\theta^n_k\}_{k\ge 0}$; nevertheless,
\begin{align*}
\mathds{E}\left[\theta^n_k\right]& = \mathds{E}\left[\sum_{j=1}^k T^n_j\right] = k \mathds{E}\left[T^n_1\right] = \frac{2k}{\lambda_n},\\
\mathds{E}\left[\chi^n_k\right]&=\mathds{E}\left[\sum_{j=1}^k \left(\widehat{L}^{n}_{j} + \widehat{H}^{n}_{j}\right)\right] = k \mathds{E}\left[\widehat{L}^{n}_{1} + \widehat{H}^{n}_{1}\right] = \frac{2k}{\lambda_n}.
\end{align*}
In words, the average jump times of $\pi_2(\mathcal{J}^n)$ coincide with the average jump times of $\mathcal{J}$, so that the process $\pi_2(\mathcal{J}^n)$ is indeed {\it similar} to $\mathcal{J}$. A more precise and stronger version of this statement is proven in Section \ref{sec:rate}.

In order to construct $\mathcal{R}^n$, define $r^n:\{+,-\}\times\mathcal{S} \mapsto \mathds{R}$ by
\begin{align*}
	r^n(+,i) & := \lambda_n/\omega_i^n, \quad r^n(-,i)  := -\lambda_n/\eta_i^n.
\end{align*}
Let
\[
	R^n(t) := \int_0^t r^n(J^n(s))\dd s,\quad t\ge 0.
\]
The pair $(\mathcal{R}^n, \mathcal{J}^n)$ is indeed a stochastic fluid process. Moreover, from the construction of $\mathcal{J}^n$, (\ref{eq:sumH1}) and (\ref{eq:sumH2}), it follows that for all $k \ge 0$
\begin{align}
	\label{eq:RnS1} 
	R^n\left(\chi_k^n\right) & = \sum_{j=1}^k\left(-L^{n}_{j} + H^{n}_{j}\right) = R(\theta_k^n), \\ 
	\label{eq:RnS2}  
	R^n\left(\chi_k^n + \widehat{L}^{n}_{k+1}\right) & = \sum_{j=1}^k \left(- L^{n}_{j} + H^{n}_{j}\right) - L^{n}_{k+1}=\min_{\theta_k^n\le t\le \theta_{k+1}^n} R(t).
\end{align}
This implies that the values at the inflection points of the level process $\mathcal{R}^n$ coincide with the values of $\{ R(\theta_k^n)\}_{k\ge 0}$ and $\{\min_{\theta_k^n\le t\le \theta_{k+1}^n} R(t)\}_{k\ge 0}$. In conclusion, the values of $\mathcal{R}$ at the arrival epochs of $\mathcal{M}^n$, and the minimum level attained between them, are embedded in $\mathcal{R}^n$. Figure \ref{fig:flip-flop} illustrates the construction of the stochastic fluid process $(\mathcal{R}^n, \mathcal{J}^n)$ corresponding to the Markov-modulated Brownian motion $(\mathcal{R}, \mathcal{J})$.

\begin{figure}[!ht]
	\begin{center} 
	\includegraphics[scale=0.4]{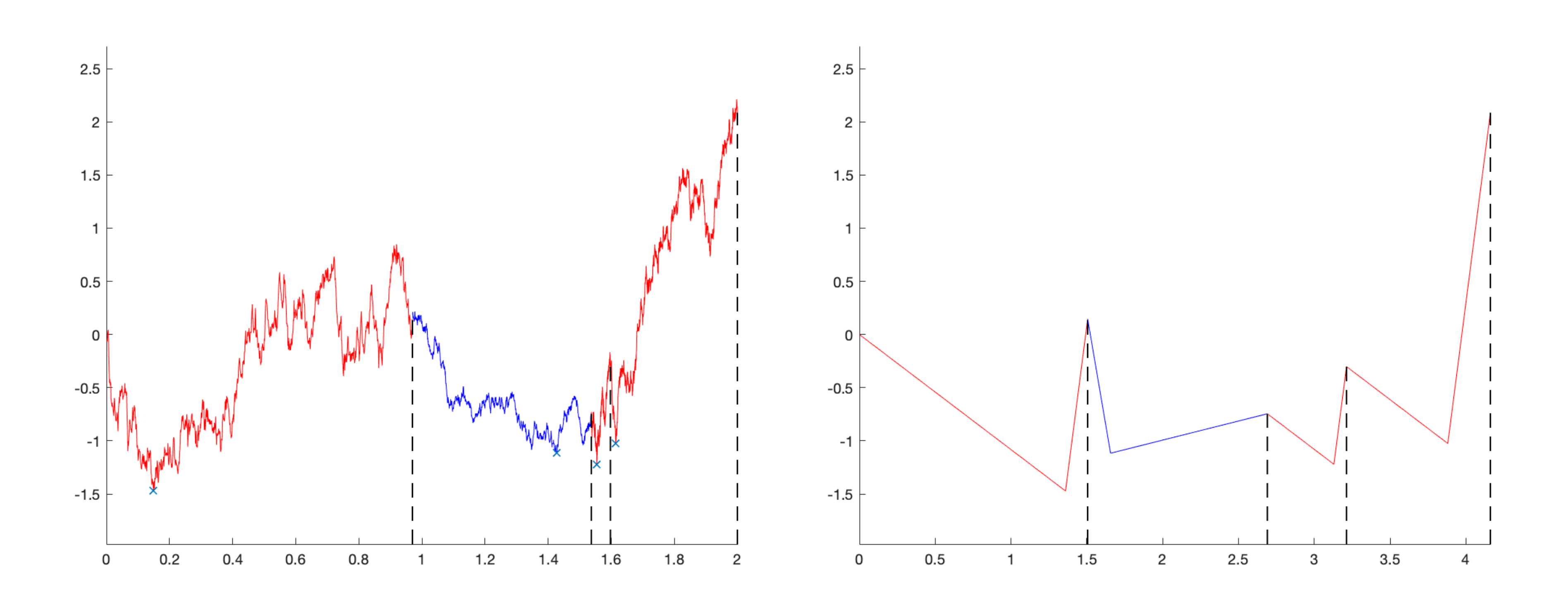}
	 \put(-138,7){\makebox(0,0)[l]{$\chi^n_1$}}
	\put(-87,7){\makebox(0,0)[l]{$\chi^n_2$}}
	\put(-63,7){\makebox(0,0)[l]{$\chi^n_3$}}
	\put(-23,7){\makebox(0,0)[l]{$\chi^n_4$}}
	\put(-328,7){\makebox(0,0)[l]{$\theta^n_1$}}
	\put(-276,7){\makebox(0,0)[l]{$\theta^n_2$}}
	\put(-268,7){\makebox(0,0)[l]{$\theta^n_3$}}
	\put(-236,7){\makebox(0,0)[l]{$\theta^n_4$}}
	\put(-415,164){\makebox(0,0)[l]{$R(t)$}}
	\put(-204,164){\makebox(0,0)[l]{$R^n(t)$}}
	\put(-221,20){\makebox(0,0)[l]{$t$}}
	\put(-11,20){\makebox(0,0)[l]{$t$}}
	\end{center} 
\caption{\textbf{(Left)} A sample path of an MMBM $(\mathcal{R},\mathcal{J})$ with $\mathcal{J}$ being on $\mathcal{S} = \{1, 2\}$: arrivals corresponding to $\mathcal{M}^n$ occur at $\{\theta_i^n\}_{i \geq 0}$, and the minima of $\mathcal{R}$ attained between these arrivals are highlighted with blue crosses. When $J(t) = 1$ (red), $\mu_1 = 5$ and $\sigma_1^2 = 4$. When $J(t) = 2$ (blue), $\mu_2 = -2$ and $\sigma_2^2 = 1$. \textbf{(Right)} An associated sample path of a stochastic fluid process $(\mathcal{R}^n,\mathcal{J}^n)$: jumps from $\{+\}\times\mathcal{S}$ to $\{-\}\times\mathcal{S}$ occur at $\{\chi_i^n\}_{i \geq 1}$. The values of $R^n(\chi_i^n)$ match with those of $R(\theta_i^n)$ for $i = 1, \ldots 4$, respectively.}
\label{fig:flip-flop}
\end{figure}
\section{Proof of Theorem \ref{th:rate}.}
	\label{sec:rate}
As $\lambda_n \rightarrow \infty$, the partitions induced by $\{\theta^n_k\}_{k\ge 0}$ and $\{\chi^n_k\}_{k\ge 0}$ become finer. Intuitively, this and (\ref{eq:RnS1})  indicate that $\mathcal{R}^n$ \emph{approximates} $\mathcal{R}$ as $n\rightarrow\infty$, which is stated more precisely in Theorems \ref{th:strongConvMMBM} and \ref{th:rate}. We devote this section to rigorously prove Theorem \ref{th:rate}, from which Theorem \ref{th:strongConvMMBM} follows as a corollary.

Let $\lambda_n=2n^2$; this makes our results and rates comparable to those of \cite{gorostiza1980rate} and related papers. Fix $T \in [0,1)$, $q>0$ and w.l.o.g. consider $n\ge 2$ throughout. 

\textbf{Proof of Part (i).} In order to prove (\ref{eq:rate2}), notice that
\begin{align}
	\label{eq:rateaux1}
\mathds{P}\left(\sup_{0 \le s\le T} |R(s) - R^n(s)| > \alpha \varepsilon_n\right)\le \mathds{P}(A^n) + \mathds{P}(\chi^n_{n^2} < T), 
\end{align}
where 
\begin{align*} 
	A^n := \left\{\sup_{0 \le s\le \chi^{n}_{n^2}} |R(s) - R^n(s)| > \alpha \varepsilon_n\right\} = \left\{\max_{1 \leq k \leq n^2} \sup_{\chi^{n}_{k-1} \le s \le \chi^{n}_{k}} |R(s) - R^n(s)| > \alpha \varepsilon_n\right\}, 
\end{align*} 
where $\alpha$ is a constant to be determined later. We now show that each of the quantities $P(A^n)$ and $P(\chi_{n^2}^n < T)$ are $o(n^{-q})$.  

The triangle inequality implies that
\begin{align*} 
	A^n \subseteq B_1^n\cup B_2^n\cup B_3^n\cup B_4^n,
\end{align*} 
where
\begin{align*}
B_1^n & := \left\{\max_{1 \le k\le n^2}\sup_{\chi_{{k - 1}}^n \le s\le \chi_{{k}}^n } |R(s) - R(\chi_k^n)| > \alpha \varepsilon_n/4\right\},\\
\displaybreak
B_2^n & := \left\{\max_{1 \le k\le n^2} |R(\chi_k^n) - R(k/n^2)| > \alpha \varepsilon_n/4\right\},\\
B_3^n & := \left\{\max_{1 \le k\le n^2} |R(k/n^2) - R^n(\chi_k^n)| > \alpha \varepsilon_n/4\right\},\\
B_4^n & := \left\{\max_{1 \le k\le n^2}\sup_{\chi_{{k - 1}}^n \le s\le \chi_{{k}}^n} |R^n(\chi_k^n) - R^n(s)| > \alpha \varepsilon_n/4\right\}.
\end{align*}
By (\ref{eq:RnS1}), $B_3^n$ can be rewritten as 
	\[
	B_3^n = \left\{\max_{1 \le k\le n^2} |R(\theta_k^n) - R(k/n^2)| > \alpha \varepsilon_n/4\right\},
	\]
	so that the events $B_1^n, B_2^n$ and $B_3^n$ concern only the process $\mathcal{R}$, not $\mathcal{R}^n$. 
	
	Let $\{\delta_n\}_{n\ge 0}$ be any positive and decreasing sequence. Making a further partition, we obtain 
\begin{align} 
	\label{eqn:Anfurther} 
A^n
%
%
 \subseteq \left(\left(B_1^n\cup B_2^n\cup B_3^n\right) \cap \left(  C^n_\chi \cup C^n_\theta\right)^c \right) \cup \left( C^n_\chi \cup C^n_\theta\right) \cup B_4^n,
\end{align} 
where
\begin{align*}
 C^n_\chi := \left\{\max_{1 \le k\le n^2} |\chi_k^n - k/n^2|>\delta_n\right\}, \quad 
  C^n_\theta := \left\{\max_{1 \le k\le n^2} |\theta_k^n - k/n^2|>\delta_n\right\}.
\end{align*}
On $(C^n_\chi\cup C^n_\theta)^c$, for all $k = 1, \ldots, n^2$ we have that 
\begin{align*} 
	\chi_k^n, \theta_k^n, \chi_{k + 1}^n \in [k/n^2 - \delta_n, (k + 1)/n^2 + \delta_n]. 
\end{align*}
Let $x_+:=\max\{x,0\}$ for $x\in\mathds{R}$. If there exists $a, b \in [k/n^2 - \delta_n, (k + 1)/n^2 + \delta_n]$ such that $|R(a_+) - R(b_+)| > \alpha \varepsilon_n/4$ , then by the triangle inequality either $|R([k/n^2 - \delta_n]_+) - R(a_+)| > \alpha \varepsilon_n / 8$, or $|R([k/n^2  - \delta_n]_+) - R(b_+)| > \alpha \varepsilon_n / 8$. 
 Therefore, 
\begin{align} 
	\label{eqn:whyneedDn} 
	 \left(B_1^n\cup B_2^n\cup B_3^n\right) \cap  \left(C^n_\chi \cup C^n_\theta\right)^c
	\subseteq D^n,
\end{align} 
where
\begin{align*} 
	D^n &:= \left\{\max_{1 \le k\le n^2}\sup_{a \in [k/n^2 - \delta_n, (k + 1)/n^2 + \delta_n]} \left|R\left((k/n^2  - \delta_n)_+\right) - R\left(a_+\right)\right| > \alpha \varepsilon_n/8\right\}\\
	&\;= \left\{\max_{1 \le k\le n^2}\sup_{s \in [0, n^{-2} + 2\delta_n]} \left|R\left((k/n^2  - \delta_n + s)_+\right) - R\left((k/n^2  - \delta_n)_+\right)\right| > \alpha \varepsilon_n/8\right\}. 
	%
\end{align*}
Thus, by \eqref{eqn:Anfurther} and \eqref{eqn:whyneedDn}, 
\begin{align}
	\label{eq:auxsum1}
\mathds{P}(A^n) \le \mathds{P}(D^n) + \mathds{P}(C^n_\chi ) + \mathds{P}(C^n_\theta)   + \mathds{P}(B^n_4).
\end{align}
 
In the following, we show that with an appropiate choice of $\alpha$ and $\{\delta_n\}_{n\ge 1}$, each summand in the RHS of (\ref{eq:auxsum1}) is an $o(n^{-q})$ function. For the remainder of the section, $K_j$, for $j \in \mathds{N}$, denote generic constants that are used to simplify bounds and are not dependent on $q$ or $n$.

\textbf{Bounding $P(B^n_4)$.} Let 
	$\omega_{\min}^n := \min\limits_{i\in\mathcal{S}}\omega_i^n,  \; \eta_{\min}^n := \min\limits_{i\in\mathcal{S}}\eta_i^n, \;\kappa_n :=\omega_{\min}^n\wedge\eta_{\min}^n. 
	$
Then, 
\begin{align}
\mathds{P}(B_4^n)&\le \sum_{1\le k\le n^2}\mathds{P}\left(\sup_{\chi_{{k - 1}}^n \le s\le \chi_{{k}}^n}  \left| R^n(\chi_k^n) - R^n(s)\right| > \alpha \varepsilon_n/4\right) \nonumber \\
&\le \sum_{1\le k\le n^2}\mathds{P}\left(H^{n}_k>\alpha \varepsilon_n/4\right) + \mathds{P}\left(L^{n}_k>\alpha \varepsilon_n/4\right) \nonumber\\
&\le n^2\left(e^{-\omega^n_{\min}\alpha \varepsilon_n/4} + e^{-\eta^n_{\min}\alpha \varepsilon_n/4}\right)\nonumber\\
&\le n^2 \left( 2e^{-\kappa_n(\alpha \varepsilon_n/4)}\right). \nonumber
\end{align}
By definition, 
\begin{align*} 
	\kappa_n := \min_{i\in\mathcal{S}}\left\{\left(\sqrt{\frac{\mu^2_i}{\sigma^4_i} + \frac{2n^2}{\sigma^2_i}} + \frac{\mu_i}{\sigma^2_i}\right)\wedge\left( \sqrt{\frac{\mu^2_i}{\sigma^4_i} + \frac{2n^2}{\sigma^2_i}} - \frac{\mu_i}{\sigma^2_i}\right)\right\}= O(n),
\end{align*} 
where the notation $O(g(n))$, for $g:\mathds{N}\mapsto \mathds{R}_+$, denotes a function $f:\mathds{N}\mapsto \mathds{R}$ such that $\limsup_{n \rightarrow \infty} |f(n)|/g(n) \le M$ for some $M\in\mathds{R}$. 
Then, there exists $n_1$ such that $\kappa_n >n^{1/2}$ for all $n\ge n_1$, and so for $n\ge n_1$
%
\begin{align*}
	\mathds{P}(B_4^n)&\le n^2 \left(2e^{-\kappa_n(\alpha/4)n^{-1/2}\log(n)}\right)  \le n^2 \left(2e^{-(\alpha/4)\log(n)}\right) =K_1n^{2-\alpha/4},
\end{align*}
Choose $\alpha$ to be larger than $\alpha_1:=8q+8$. Then, $\mathds{P}(B_4^n)$ is an $O(n^{-2q})$ function and thus, it is an $o(n^{-q})$ function. 

\textbf{Bounding $P(C^n_{\chi})$ and $P(C^n_{\theta})$.} Let $\{p_n\}_{n}$ be a sequence taking values in $\mathds{N}$. By Doob's $L_p$-maximal inequality, we have 
\begin{align}
	\label{eq:Ansigmabound1}
\mathds{P}(C^n_\chi )\le\frac{\mathds{E}[(\chi_{n^2}^n - 1)^{2p_n}]}{(\delta_n)^{2p_n}}.
\end{align}
Since $\chi_{n^2}^n$ is a convolution of $2n^2$ exponential r.v.s of rate $2n^2$, $\chi_{n^2}^n\sim\mbox{Erlang}(2n^2, 2n^2)$, so that (\ref{eq:Ansigmabound1}) and Lemma \ref{lem:cmomentErl} (in the Appendix) imply that
\begin{align}
\mathds{P}(C^n_\chi )&\le(\delta_n)^{-2p_n} \frac{(2p_n)!\sqrt{2n^2}}{(2n^2)^{2p_n}} \frac{\sqrt{2n^2}^{2p_n+1}-1}{\sqrt{2n^2}-1}\nonumber\\
& \le K_2 \frac{(\delta_n)^{-2p_n}(2p_n)! }{n^{2p_n-1}}\nonumber\\
& \le K_2 n\left(\frac{2p_n }{\delta_n n}\right)^{2p_n}.\label{eq:boundAnsigma2}
\end{align}

Similarly, since $\theta_{n^2}^n\sim \mbox{Erlang}(n^2, n^2)$, we have for $n \geq 2$
\begin{align}\label{eq:boundAntheta2}
\mathds{P}(C^n_\theta)\le K_3 n\left(\frac{2p_n }{\delta_n n}\right)^{2p_n}.
\end{align}
Set 
\begin{align}
	\label{eq:deltan}
		\delta_n := 2p_nn^{(q+1/2)/p_n - 1},\quad n\ge 1. 
\end{align}
With this choice of $\{\delta_n\}$, both (\ref{eq:boundAnsigma2}) and the RHS of (\ref{eq:boundAntheta2}) are proportional to $n^{-2q}$, so that $\mathds{P}(C^n_\chi )$ and $\mathds{P}(C^n_\theta)$ are $o(n^{-q})$ functions. 

\textbf{Bounding $P(D^n)$.} Set $p_n := \lfloor\log(n)\rfloor$; one can verify that with this choice of $\{p_n\}$, the sequence $\{\delta_n\}$ is a $O(n^{-1}\log(n))$ function. Define $\delta_n' := 2\delta_n + n^{-2}$ and let $n_2\ge n_1$ be such that $\alpha_1\varepsilon_n/8-\mu_{\max}\delta_n'>0$ for all $n\ge n_2$. Then, for any $\alpha\ge \alpha_1$, we have $\alpha\varepsilon_n/8-\mu_{\max}\delta_n'>0$ for all $n\ge n_2$. Thus,
\begin{align}
 \mathds{P}(D^n)
 & \leq {\sum_{k = 1}^{n^2} \mathds{P} \left(\sup_{0 \leq s \leq \delta_n'} \left| R\left((k/n^2 - \delta_n)_+\right) - R\left((k/n^2 - \delta_n + s)_+\right) \right| > \alpha \varepsilon_n/8\right)}\nonumber \\
%
& \leq { \sum_{k = 1}^{n^2} \sum_{i \in \mathcal{S}} \mathds{P}\left(\left.\sup_{0 \leq s \leq \delta_n'} \left| R\left((k/n^2 - \delta_n)_+\right) - R\left((k/n^2 - \delta_n + s)_+\right) \right| > \alpha \varepsilon_n/8 \; \right| \; G_{i, k, n} \right)},  \nonumber
%
\intertext{where $G_{i,k,n} := \{J\left(k/n^2 - \delta_n\right) = i\}$. By strong Markov property, we can rewrite the above RHS to obtain} 
%
 \mathds{P}(D^n) & = {n^2\sum_{i \in \mathcal{S}} \mathds{P}\left(\left.\sup_{0 \leq s \leq \delta_n'} \left| R(s) \right| > \alpha \varepsilon_n/8 \; \right| \; J(0) = i \right)} \nonumber \\ 
%
%
&\le 
n^2m \left[\frac{2}{\sqrt{2\pi}}
\frac{\sqrt{\sigma_{\max}  \delta_n'}}{\alpha \varepsilon_n / 8 - \mu_{\max} \delta_n'}
\exp\left(-\frac{(\alpha\varepsilon_n/8-\mu_{\max}\delta_n')^2}{2\sigma_{\max}\delta_n'}\right)
\right]\nonumber \\
&\le K_4n^2\exp\left(-\frac{(\alpha\varepsilon_n/8-\mu_{\max}\delta_n')^2}{2\sigma_{\max}\delta_n'}\right),\quad n\ge n_2, \label{eq:PDn2}
\end{align}
%
%
where the first inequality follows from Lemma \ref{lem:boundforMMBM} (in the Appendix).
%

Let $n_3\ge n_2$ be such that $\varepsilon_n\le 1$ and $\delta_n'\le 3\delta_n$ for all $n\ge n_3$.
Then
\begin{align}
\mathds{P}(D^n)&\le  K_4n^2 \exp\left(-\frac{(\alpha\varepsilon_n/8)^2 + (\mu_{\max}\delta_n')^2 - 2(\alpha\varepsilon_n/8)(\mu_{\max}\delta_n') }{2\sigma_{\max}\delta_n'}\right) \nonumber\\
& \le  K_4 n^2 \exp\left(-\frac{(\alpha\varepsilon_n/8)^2  - 2(\alpha\varepsilon_n/8)(\mu_{\max}\delta_n') }{2\sigma_{\max}\delta_n'}\right) \nonumber\\
& \le  K_4 n^2 \exp\left(-\frac{(\alpha\varepsilon_n/8)^2}{2\sigma_{\max}\delta_n'}  + \frac{\alpha\mu_{\max}}{8\sigma_{\max}}\right)\nonumber\\
%
 & \le  K_5 n^2 \exp\left(-\frac{(\alpha\varepsilon_n/8)^2}{6\sigma_{\max}\delta_n}\right)\nonumber\\
&= K_5 n^2 \exp\left(-\frac{\alpha^2n^{-1}(\log(n))^2}{K_6\lfloor\log(n)\rfloor n^{(q+1/2)/\lfloor\log(n)\rfloor -1}}\right)\nonumber \\
&  \le  K_5 n^2 \exp\left(-\frac{\alpha^2\log(n)}{K_6 n^{(q+1/2)/\lfloor\log(n)\rfloor}}\right),\quad n\ge n_3.\label{eq:boundDn3}
\end{align}
Let $\gamma(q)=\sup_{n\ge n_3} K_6 n^{(q+1/2)/\lfloor\log(n)\rfloor}$, which is finite since $n^{(q+1/2)/\lfloor\log(n)\rfloor}$ converges to $e^{q + 1/2}$. If $\alpha > \alpha_2:=\sqrt{(2q+2)\gamma(q)}$, by (\ref{eq:boundDn3}) we have
\begin{align*} 
\mathds{P}(D^n) & \le K_5 n^2 \exp\left(-\frac{(2q + 2)\gamma(q)\log(n)}{ K_6 n^{(q+1/2)/\lfloor\log(n)\rfloor}} \right)  \\ 
& \le K_5 n^2 \exp\left(-(2q + 2)\log(n)\right)  \\
& = K_5 n^{-2q}, \quad n\ge n_3,
\end{align*} 
which implies $\mathds{P}(D^n)$ is an $o(n^{-q})$ function.  Thus, all four terms in the LHS of (\ref{eq:auxsum1}) are $o(n^{-q})$ functions, and so is $\mathds{P}(A^n)$.

Finally, let $n_4\ge n_3$ be such that $\delta_n<1-T$ for all $n\ge n_4$. Then, 
\begin{align*} 
	\mathds{P}\left(\chi^n_{n^2} < T\right)\le \mathds{P}\left(C^n_{\chi}\right) \mbox{ for all } n\ge n_4,
\end{align*} 
meaning that $(\ref{eq:rateaux1})$ is an $o(n^{-q})$ function. The proof of (\ref{eq:rate2}) is now complete.

\textbf{Proof of Part (ii).} Now, let $\{\rho_{\ell}\}_{\ell\ge 1}$ be a sequence with $\rho_{\ell}\downarrow 0$, and define 
\begin{align*} 
	E^\ell := \bigcap_{j=1}^\infty \bigcup_{n=j}^\infty \left\{\pi_2(J^n(s))\neq J(s)\mbox{ for some }s\in(T-\rho_\ell, T + \rho_\ell)\right\}.
\end{align*} 
Proving (\ref{eq:rateJ1}) is equivalent to showing that $\mathds{P}(\cap_{\ell=1}^\infty E^\ell) = 0$, which in turn is equivalent to proving that $\lim\limits_{\ell\rightarrow\infty}\mathds{P}(E^\ell)=0$. 

Define $\beta_0:=0$, $\beta^n_0:= 0$ for $n\ge 0$. For $k\ge 0$, let
\begin{align*}
\beta_{k+1}&:=\inf\left\{s>\beta_k: J(s^-)\neq J(s)\right\},\\
\beta_{k+1}^n&:= \inf \left\{s>\beta_k^n: \pi_2(J(s^-))\neq \pi_2(J(s))\right\},\quad n\ge 0.
\end{align*}
For any $a,b\in\mathds{R}$, define $M^0[a,b]: = M(b_+)-M(a_+)$; recall that $\mathcal{M}^0$ is a Poisson process of rate $\lambda_0/2$ defined in Section \ref{sec:ProofStrongConvMMBM}. Then,
\begin{align}
E^\ell& \subseteq \left\{M^0[T-2\rho_\ell, T+2\rho_\ell]>0\right\} \cup \left(\left\{M^0[T-2\rho_\ell, T+2\rho_\ell]=0\right\} \cap E^{\ell}\right).  
\end{align}
Note that $\mathds{P}(M^0[T-2\rho_\ell, T+2\rho_\ell]>0) \le 1-e^{-(\lambda_0/2)4\rho_\ell} \rightarrow 0$ as $\ell\rightarrow\infty$. Thus, in order to prove that $\lim\limits_{\ell\rightarrow\infty}\mathds{P}(E^\ell)= 0$, it is sufficient to show that 
	\begin{align} 
		\lim_{\ell\rightarrow\infty}\mathds{P}\left(\{M^0[T-2\rho_\ell, T+2\rho_\ell]=0\}\cap E^\ell\right)=0,
	\end{align} 
which we do next. A path inspection reveals that
\begin{align}
& \{M^0 [T-2\rho_\ell, T+2\rho_\ell]=0\}\cap E^\ell\nonumber\\
&\subseteq\left\{\bigcup_{k\ge 0}\left\{ \beta_k<T-2\rho_\ell <  T+2\rho_\ell<\beta_{k+1}\right\}\right\}\cap E^\ell \nonumber\\
&=\bigcup_{k\ge 0}\bigcap_{j=1}^\infty\bigcup_{n=j}^\infty \left\{\begin{array}{cc}\{\beta_k<T-2\rho_\ell<  T+2\rho_\ell<\beta_{k+1}, T-\rho_\ell < \beta^n_k \} \; \cup\\\{\beta_k<T-2\rho_\ell < T+2\rho_\ell<\beta_{k+1},  \beta^n_{k+1}<T+\rho_\ell \}\end{array}\right\}\nonumber\\
&\subseteq\bigcup_{k\ge 0}\bigcap_{j=1}^\infty\bigcup_{n=j}^\infty \left(\{|\beta_k - \beta_k^n|>\rho_\ell, \beta_k< T-2\rho_\ell \}\cup\{|\beta_{k+1} - \beta_{k+1}^n|>\rho_\ell, \beta^n_{k+1}< T+\rho_\ell  \}\right)\nonumber\\
\displaybreak
&\subseteq \left(\bigcap_{j=1}^\infty\bigcup_{n=j}^\infty \left\{\max_{k:\theta^n_k< T-2\rho_\ell}|\chi^n_k - \theta^n_k|>\rho_\ell\right\}\right) \; \cup \; \left(\bigcap_{j=1}^\infty \bigcup_{n=j}^\infty \left\{\max_{k:\chi^n_k< T+\rho_\ell}|\chi^n_k - \theta^n_k|>\rho_\ell\right\}\right)\nonumber\\
&\subseteq \left(\bigcap_{j=1}^\infty\bigcup_{n=j}^\infty \left\{\max_{1\le k\le n^2}|\chi^n_k - \theta^n_k|>\rho_\ell\right\}\right) \; \cup \; \left(\bigcap_{j=1}^\infty \bigcup_{n=j}^\infty\{ \theta^n_{n^2}<T-2\rho_\ell\}\right) \cup \left(\bigcap_{j=1}^\infty \bigcup_{n=j}^\infty\{\chi^n_{n^2}<T+\rho_\ell\}\right).
\label{eq:last2sum}
\end{align}

Since $\{\delta_n\}_{n\ge 1}$ is a sequence such that $\delta_n\downarrow 0$, then for each $\ell\ge 1$,
\begin{align*}
	%
\mathds{P}\left(\bigcap_{j=1}^\infty\bigcup_{n=j}^\infty\{\max_{1\le k\le n^2}|\chi^n_k - \theta^n_k| > \rho_\ell\}\right)\le \mathds{P}\left(\bigcap_{j=1}^\infty\bigcup_{n=j}^\infty\{\max_{1\le k\le n^2}|\chi^n_k - \theta^n_k| > 2\delta_n\}\right) = 0, 
\end{align*}
%
where the last equality follows from the fact that   
\begin{align*} 
	\mathds{P}\left(\max_{1\le k\le n^2}|\chi^n_k - \theta^n_k| > 2\delta_n\right)\le \mathds{P}(C^n_\chi\cup C^n_\theta)=o(n^{-q}),
\end{align*} 
and applying Borel-Cantelli (choosing, say, $q = 2$).  Similar arguments follow for the two other events in (\ref{eq:last2sum}). Thus, $\lim\limits_{\ell\rightarrow\infty}\mathds{P}(E^\ell)=0$ and so (\ref{eq:rateJ1}) follows.

\section{An application: First passage probabilities.} 
\label{sec:applications}
Theorem \ref{th:strongConvMMBM} implies that some first passage properties of $(\mathcal{R},\mathcal{J})$ can be analysed as the limiting first passage properties of $(\mathcal{R}^n,\mathcal{J}^n)$ as $n\rightarrow\infty$. In particular, for any Borel set $A\subset\mathds{R}$ define
\begin{align*}
\tau_A &:=\inf\left\{s\ge 0: R(s)\in A\right\}, \\
\tau^n_A &:=\inf\left\{s\ge 0: R^n(s)\in A\right\},\quad n\ge 0.
\end{align*}
Then, Theorem \ref{th:strongConvMMBM} implies that for any open set $A$ and $j\in\mathcal{S}$,
\begin{align*}
\tau_A= \lim_{n\rightarrow\infty}\tau_A^n\quad\mbox{a.s.},
\end{align*}
and on the event $\{\tau_A<\infty\}$,
\begin{align*}
\left\{J(\tau_A)=j\right\}=\bigcup_{i=0}^\infty\bigcap_{n=i}^\infty\{\pi_2(J^n(\tau_A))=j\}\quad\mbox{a.s.}.
\end{align*}

%
In the case $A$ takes the form $(-\infty,-x)$, for $x\ge 0$, we have the following.

\begin{Proposition} \label{prop:tauxtaunx1}For $x\ge 0$ and $n\ge 0$ define 
\[\tau_x:=\tau_{(-\infty,-x)}\quad\mbox{and}\quad\tau_x^n:=\tau_{(-\infty,-x)}^n.\] 
Then, for all $j\in\mathcal{S}$,
\begin{align}
	\{\tau_x<\infty, J(\tau_x)=j\} = \{\tau^n_x<\infty, \pi_2(J^n(\tau_x^n))=j\}.\label{eq:eventeqaux1}
\end{align}
\end{Proposition}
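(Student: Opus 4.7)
The plan is to exploit the explicit pathwise coupling encoded in \eqref{eq:RnS1}--\eqref{eq:RnS2}: on each interval $[\chi_k^n, \chi_{k+1}^n]$, $\mathcal{R}^n$ is piecewise linear, descending from $R(\theta_k^n)$ to $\min_{\theta_k^n \le t \le \theta_{k+1}^n} R(t)$ at time $\chi_k^n + \widehat{L}_{k+1}^n$, and then ascending to $R(\theta_{k+1}^n)$. Consequently, the range of $\mathcal{R}^n$ over $[\chi_k^n, \chi_{k+1}^n]$ equals exactly $\bigl[\min_{\theta_k^n \le t \le \theta_{k+1}^n} R(t),\, R(\theta_k^n) \vee R(\theta_{k+1}^n)\bigr]$. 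The strategy is to translate the first-passage of $\mathcal{R}$ into that of $\mathcal{R}^n$ via the one-to-one correspondence between sampling intervals $[\theta_k^n, \theta_{k+1}^n]$ and fluid intervals $[\chi_k^n, \chi_{k+1}^n]$.

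The first step is to observe that, by continuity of $\mathcal{R}$,
\[
\inf_{s \ge 0} R^n(s) \;=\; \inf_{k \ge 0} \min_{\theta_k^n \le t \le \theta_{k+1}^n} R(t) \;=\; \inf_{t \ge 0} R(t),
\]
which immediately gives $\{\tau_x < \infty\} = \{\tau_x^n < \infty\}$ almost surely. On this event I would then introduce
\[
K \;:=\; \min\bigl\{k \ge 0 \,:\, \min_{\theta_k^n \le t \le \theta_{k+1}^n} R(t) < -x \bigr\},
\]
and argue that, a.s., $R(\theta_K^n) > -x$ strictly, since a Brownian motion with drift does not touch a fixed level exactly at an independent exponential epoch. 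Continuity of $\mathcal{R}$ then places $\tau_x$ in the open interval $(\theta_K^n, \theta_{K+1}^n)$. Since the jumps of $\mathcal{J}$ occur only at arrival epochs of $\mathcal{M}^0$, a subset of those of $\mathcal{M}^n$, $\mathcal{J}$ is constant and equal to $X^n(K)$ on $[\theta_K^n, \theta_{K+1}^n)$; hence $J(\tau_x) = X^n(K)$.

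The analogous identification for $\mathcal{R}^n$ follows directly from the coupling. For $k < K$, the range of $\mathcal{R}^n$ over $[\chi_k^n, \chi_{k+1}^n]$ is contained in $[-x,\infty)$, while on $[\chi_K^n, \chi_K^n + \widehat{L}_{K+1}^n]$ the process $\mathcal{R}^n$ decreases linearly from $R(\theta_K^n) > -x$ to $\min_{\theta_K^n \le t \le \theta_{K+1}^n} R(t) < -x$. Therefore $\tau_x^n$ is the unique crossing time inside this sub-interval, and by the definition of $\mathcal{J}^n$ one has $J^n(s) = (-, X^n(K))$ throughout $[\chi_K^n, \chi_K^n + \widehat{L}_{K+1}^n)$, giving $\pi_2(J^n(\tau_x^n)) = X^n(K)$. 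Both events in \eqref{eq:eventeqaux1} then coincide almost surely with $\{\tau_x < \infty,\, X^n(K) = j\}$, which is the desired equality.

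The main obstacle will be the null-set bookkeeping: one has to rule out the a.s.\ negligible events on which $\mathcal{R}$ grazes $-x$ tangentially or $R(\theta_K^n) = -x$ exactly, both of which would spoil the identification of $K$ as the unique first-crossing sub-interval for both processes. These are standard Brownian-motion facts, but they must be invoked carefully to justify the strict inequality $R(\theta_K^n) > -x$ used throughout the identification.
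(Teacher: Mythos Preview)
Your argument is correct and follows essentially the same route as the paper: your index $K$ is exactly the paper's $N=N^n$, and both proofs reduce the identity to $J(\tau_x)=X^n(K)=\pi_2(J^n(\tau_x^n))$ via the pathwise matching \eqref{eq:RnS1}--\eqref{eq:RnS2} and the fact that $\mathcal{J}$ (resp.\ $\pi_2(\mathcal{J}^n)$) is constant on each interval $[\theta_k^n,\theta_{k+1}^n)$ (resp.\ $[\chi_k^n,\chi_{k+1}^n)$). Your version is somewhat more explicit about the range of $\mathcal{R}^n$ on each block and about the null-set exclusions, whereas the paper simply asserts $N^n=N$ as a consequence of \eqref{eq:RnS1}--\eqref{eq:RnS2}, but the underlying idea is identical.
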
 
\begin{proof}
Fix $x\ge 0$ and $n\ge 0$. Let 
\[N:=\sup\{k\ge 0: \tau_x\ge \theta^n_k\}.\]
This implies that $\tau_x\in[\theta^n_N,\theta^0_{N+1})$ on $\{N<\infty\}$, and since $\mathcal{J}$ is constant between the epochs $\{\theta^n_k\}_{k\ge 0}$, then $J(\theta^n_N)=J(\tau_x)$. Similarly, if we define 
\[N^n:=\sup\{k\ge 0: \tau_x^n\ge \chi^n_k\},\]
then  $\pi_2(J^n(\chi^n_{N^n}))=\pi_2(J^n(\tau_x^n))$ on $\{N^n<\infty\}$. Equations (\ref{eq:RnS1}) and (\ref{eq:RnS2}) imply that $N_n=N$, and since $J(\theta^n_k)=\pi_2(J^n(\chi^n_k))$ for all $k\ge 0$ (see (\ref{eq:sameJandJn})), then 
\[J(\tau_x)=J(\theta^n_N)=\pi_2(J^n(\chi^n_{N^n}))=\pi_2(J^n(\tau_x^n))\quad\mbox{on}\quad\{N<\infty\},\]
 and (\ref{eq:eventeqaux1}) follows.
\end{proof}
The following result describes one central first passage distributional property of our construction.
\begin{theorem}
	\label{th:quadratic1}
For $n\ge 0$, let $U_n$ denote the infinitesimal {generator} associated to {the process} $\{\pi_2(J^n(\tau^n_x))\}_{x\ge 0}$. Then, $U_n$ is a solution to the quadratic matrix equation 
\begin{align}
	\label{eq:quadratic2}
	X^2 + 2\Delta_{\mu}\Delta_{\sigma}^{-2}X + 2\Delta_{\sigma}^{-2}Q=0
\end{align}
where $\Delta_{\mu} = \mbox{diag}\{\mu_i: i\in\mathcal{S}\}$,  $\Delta_{\sigma}=\mbox{diag}\{\sigma_i: i\in\mathcal{S}\}$. Furthermore $U_n$ corresponds to the infinitesimal {generator} associated to $\{J(\tau_x)\}_{x\ge 0}$.
\end{theorem}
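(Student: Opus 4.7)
My plan is to identify $U_n$ explicitly in terms of the classical downcrossing probability matrix $\Psi_n$ of the fluid process $(\mathcal{R}^n,\mathcal{J}^n)$, and then to reduce the matrix Riccati equation satisfied by $\Psi_n$ to \eqref{eq:quadratic2}.

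First I would establish that $\{\pi_2(J^n(\tau^n_x))\}_{x\ge 0}$ is a continuous-time Markov chain on $\mathcal{S}$, the Markov property following from the strong Markov property of $(\mathcal{R}^n,\mathcal{J}^n)$ applied at the family of stopping times $\{\tau^n_x\}_{x\ge 0}$. Its generator $U_n$ can be read off from the construction in Section~\ref{sec:ProofStrongConvMMBM}: starting from state $i\in\mathcal{S}$, the chain remains at $i$ for exactly the total level drop produced by the current $(-,i)$ segment of $\mathcal{R}^n$, which by Theorem~\ref{th:WHBM1} is $\operatorname{Exp}(\omega_i^n)$-distributed; once that segment ends the fluid performs an excursion into $\{+\}\times\mathcal{S}$ above the current record and returns to the record in state $(-,j)$ with probability $(\Psi_n)_{ij}$. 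This yields
\[
U_n=\Delta_\omega^n\,(\Psi_n-I),\qquad\Delta_\omega^n:=\operatorname{diag}\{\omega_i^n:i\in\mathcal{S}\}.
\]

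Next, the standard first-step analysis for stochastic fluid processes (see, e.g., \cite{ramaswami2013fluid}) shows that $\Psi_n$ satisfies a matrix Riccati equation; substituting the block form of the intensity matrix of $\mathcal{J}^n$ and the rates $r^n(\pm,i)$ and factoring, this equation takes the form
\[
(\Psi_n\Delta_\omega^n-\Delta_\eta^n)(\Psi_n-I)+2\lambda_n^{-1}\Delta_\eta^n Q=0,\qquad\Delta_\eta^n:=\operatorname{diag}\{\eta_i^n:i\in\mathcal{S}\}.
\]
The main algebraic step is to substitute $\Psi_n=I+(\Delta_\omega^n)^{-1}U_n$ into this equation, left-multiply by $\Delta_\omega^n$, and invoke the two Wiener--Hopf identities $\Delta_\omega^n-\Delta_\eta^n=2\Delta_\mu\Delta_\sigma^{-2}$ (from $\omega_i^n-\eta_i^n=2\mu_i/\sigma_i^2$) and $\Delta_\omega^n\Delta_\eta^n=\lambda_n\Delta_\sigma^{-2}$ (from $\omega_i^n\eta_i^n=\lambda_n/\sigma_i^2$), together with the commutativity of diagonal matrices. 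All occurrences of $\lambda_n$ then cancel and one arrives at \eqref{eq:quadratic2}.

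For the second assertion, I would invoke Proposition~\ref{prop:tauxtaunx1} together with the pathwise identity $\inf_{t\ge 0}R^n(t)=\inf_{t\ge 0}R(t)$ (inherited from \eqref{eq:RnS1}--\eqref{eq:RnS2} and the fact that $\chi_k^n,\theta_k^n\to\infty$ a.s.), which shows that $\{\tau^n_x<\infty\}=\{\tau_x<\infty\}$ a.s.\ for every $x\ge 0$ and hence that the $\mathcal{S}$-valued processes $\{\pi_2(J^n(\tau^n_x))\}_{x\ge 0}$ and $\{J(\tau_x)\}_{x\ge 0}$ are indistinguishable. Consequently they share the same generator $U_n$. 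The main conceptual hurdle is the first step, namely the explicit identification $U_n=\Delta_\omega^n(\Psi_n-I)$ from the structure of the fluid dynamics; after that, the reduction to \eqref{eq:quadratic2} is mechanical but relies crucially on the two Wiener--Hopf identities above.
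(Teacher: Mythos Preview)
Your proposal is correct and follows essentially the same route as the paper: both arguments express $U_n$ as a scalar-diagonal multiple of $\Psi_n-I$, invoke the standard Riccati equation for $\Psi_n$, and reduce it to \eqref{eq:quadratic2} via the two algebraic identities coming from the Wiener--Hopf factorisation, finishing with Proposition~\ref{prop:tauxtaunx1}. The only differences are cosmetic---you parametrise directly with $\Delta_\omega^n,\Delta_\eta^n$ whereas the paper works through $\Delta_{r^n_\pm}$ (note $|r^n(-,i)|=\lambda_n/\omega_i^n$ from the construction, so your $U_n=\Delta_\omega^n(\Psi_n-I)$ is exactly the paper's $U_n=\lambda_n\Delta_{r^n_-}^{-1}(\Psi_n-I)$), and you sketch a probabilistic derivation of that formula while the paper quotes it from \cite{bean2005algorithms}.
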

\begin{proof} 
Let $\Psi_n$ be the $p\times p$-dimensional matrix defined by
	\begin{align*} 
	(\Psi_n)_{ij}=\mathds{P}\left(\tau^n_0 <\infty, J^n(\tau_0)=(-,j) \mid R^n(0)=0, J^n(0)=(+,i)\right), \quad i,j\in\mathcal{S}.
	\end{align*} 
Define  $ 
	  \Delta_{r^n_+}  =\mbox{diag}\{r^n(+, i): i\in\mathcal{S}\}$,  $\Delta_{r^n_-}  =\mbox{diag}\{|r^n(-, i)|: i\in\mathcal{S}\}$,
and 
\begin{align*} 
	\left[\begin{array}{cc} 
	T_{++} & T_{+-} \\ 
	T_{-+} & T_{--}\end{array}\right] := 	\left[\begin{array}{rc} 
		-\lambda_n I & 2 Q + \lambda_n I \\
\lambda_n I & -\lambda_n I					\end{array}\right].
\end{align*} 
It is known \cite{bean2005algorithms} that $\Psi_n$is the minimal nonnegative solution to the Riccati matrix equation
\begin{align}
	\label{eq:Riccati1}
	\Delta_{r^n_+}^{-1}T_{++}\Psi_n + \Psi_n\Delta_{r^n_-}^{-1}T_{--} + \Psi_n\Delta_{r^n_-}^{-1}T_{-+}\Psi_n + \Delta_{r^n_+}^{-1}T_{+-} = 0, 	%
	\end{align}
and that
\begin{align*} 
	\mathds{P}(\tau^n_0 <\infty, J^n(\tau_x)=(-,j) \mid R^n(0)=0, J^n(0)=(-,i))=\bm{e}_i^{\top}e^{U_n x}\bm{e}_j,
\end{align*} 
where
\begin{align}
	U_n & = \Delta_{r^n_-}^{-1} (T_{--} + T_{-+}\Psi_n)
	=\lambda_n \Delta_{r^n_-}^{-1} (\Psi_n - I),\label{eq:rhsUn1}
\end{align}
with $\bm{e}_i$ being the $i$th unit column vector.

Premultiplying (\ref{eq:Riccati1}) by $\Delta_{r^n_-}^{-1}$ and commuting $\Delta_{r^n_-}^{-1}$ with $\Delta_{r^n_+}^{-1}$ give
\begin{align*}
-\lambda_n \Delta_{r^n_+}^{-1} \Delta_{r^n_-}^{-1}\Psi_n -\lambda_n \Delta_{r^n_-}^{-1}\Psi_n\Delta_{r^n_-}^{-1} + \lambda_n\Delta_{r^n_-}^{-1}\Psi_n\Delta_{r^n_-}^{-1}\Psi_n + \Delta_{r^n_+}^{-1}\Delta_{r^n_-}^{-1}(2Q + \lambda_nI) = 0,
\end{align*}
which leads to 
%
%
%
%
$
(\Delta_{r^n_-}^{-1}-\Delta_{r^n_+}^{-1})U_n+ \lambda_n^{-1}U_n^2 + 2\Delta_{r^n_+}^{-1}\Delta_{r^n_-}^{-1}Q = 0.
$
As $
\Delta_{r^n_-}^{-1}-\Delta_{r^n_+}^{-1} = 2\lambda_n^{-1}\Delta_{\mu}\Delta_{\sigma}^{-2}$ and $ 
\Delta_{r^n_+}^{-1}\Delta_{r^n_-}^{-1}  = \lambda_n^{-1}\Delta_{\sigma}^{-2}$, we obtain 
\begin{align}
	\label{eq:Quadratic1}
U_n^2 + 2\Delta_{\mu}\Delta_{\sigma}^{-2}U_n + 2\Delta_{\sigma}^{-2}Q=0.
\end{align}
That $U_n$ is also the infinitesimal generator of $\{J(\tau_x)\}_{x\ge 0}$ follows from Proposition \ref{prop:tauxtaunx1}.
\end{proof}

%
\begin{Remark} Theorem \ref{th:quadratic1} provides a novel understanding of the classic quadratic matrix equation associated to the {down-crossing records of an MMBM} (see \cite{Asmussen:1995jm}). Indeed, to compute the infinitesimal generator solution of (\ref{eq:quadratic2}) (which is unique by \cite{latouche2015morphing}), we can instead compute the minimal nonnegative solution to the Riccati matrix equation (\ref{eq:Riccati1}), say $\Psi_n$. The solution of (\ref{eq:quadratic2}) is then given by $U_n$ as defined in (\ref{eq:rhsUn1}). A comparable result is that of \cite{latouche2015morphing}, where the authors construct a sequence of matrices $\{U^*_n\}_{n\ge 0}$ that is shown to converge to $U$. One advantage of our construction is that each element of the sequence $\{U_n\}$ obtained through Theorem \ref{th:quadratic1} is identical to $U$.
\end{Remark}
\section*{Acknowledgements.}
Both authors are affiliated with Australian Research Council (ARC) Centre of Excellence for Mathematical and Statistical Frontiers (ACEMS).
\section*{Appendix.}
The following are some standalone results used in Sections \ref{sec:ProofStrongConvMMBM} and \ref{sec:rate}.
\begin{theorem}\label{th:reverseUnif}
Let $\mathcal{A}=\{A(t)\}_{t\ge 0}$ be a Poisson process of parameter $\lambda_a>0$, and $\mathcal{X}=\{X(n)\}_{n\ge 0}$ an independent discrete-time Markov chain with state space $\mathcal{S}$ and transition probability matrix $P$. Define the Markov jump process $\mathcal{J}=\{J(t)\}_{t\ge 0}$ be
\[J(t)= X(A(t)),\quad t\ge 0.\]
Let $\mathcal{B}=\{B(t)\}_{t\ge 0}$ be an independent Poisson process of parameter $\lambda_b>0$. Define $\mathcal{C}$ to be the superposition of the Poisson processes $\mathcal{A}$ and $\mathcal{B}$, and denote by $\{\tau_k\}_{k\ge 0}$ the arrival times of $\mathcal{C}$. If we let
\[Y(n)=J(\tau_n), \quad n\ge 0,\]
then the process $\mathcal{Y}=\{Y(n)\}_{n\ge 0}$ is a Markov chain with transition probability matrix given by
\begin{equation}\label{eq:transitionmatrix1}\frac{\lambda_a}{\lambda_a+\lambda_b}P + \frac{\lambda_b}{\lambda_a+\lambda_b}I.
\end{equation}
\end{theorem}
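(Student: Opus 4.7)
My plan is to combine the standard coloring property of superposed independent Poisson processes with the Markov property of $\mathcal{X}$. Let $\xi_k \in \{a,b\}$ indicate whether the $k$th arrival of $\mathcal{C}$ comes from $\mathcal{A}$ or from $\mathcal{B}$. Since $\mathcal{A}$ and $\mathcal{B}$ are independent Poisson processes, the random variables $\{\xi_k\}_{k\ge 1}$ are i.i.d.\ Bernoulli with
\[\mathds{P}(\xi_k=a)=p_a:=\tfrac{\lambda_a}{\lambda_a+\lambda_b},\qquad \mathds{P}(\xi_k=b)=p_b:=\tfrac{\lambda_b}{\lambda_a+\lambda_b};\]
moreover they are independent of the arrival epochs $\{\tau_k\}$ and, by hypothesis, independent of $\mathcal{X}$.

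Since $A(t)$ advances only at arrivals of $\mathcal{A}$, we have $A(\tau_n) = \sum_{k=1}^n \mathds{1}\{\xi_k = a\}$, and both $A$ and $J = X\circ A$ are constant on each interval $[\tau_n,\tau_{n+1})$. This yields the one-step recursion
\[Y(n+1) \;=\; X\!\bigl(A(\tau_n) + \mathds{1}\{\xi_{n+1} = a\}\bigr) \;=\; \begin{cases}Y(n), & \xi_{n+1}=b,\\ X(A(\tau_n)+1), & \xi_{n+1}=a.\end{cases}\]

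Setting $\mathcal{F}_n := \sigma(\tau_k, \xi_k : k\le n) \vee \sigma(X(m) : m \le A(\tau_n))$, I observe that $\xi_{n+1}$ is independent of $\mathcal{F}_n$ and of $\{X(m) : m > A(\tau_n)\}$. Conditioning on $\{A(\tau_n)=m\}\in\mathcal{F}_n$ (an event that is a functional of $(\mathcal{A},\mathcal{B})$ and hence independent of $\mathcal{X}$) and applying the Markov property of $\mathcal{X}$ at the deterministic step $m$ gives $\mathds{P}(X(m+1)=j \mid X(m)=i) = P_{ij}$. Combining these facts yields
\[\mathds{P}(Y(n+1)=j \mid \mathcal{F}_n) \;=\; p_b\,\mathds{1}\{Y(n)=j\} + p_a\,P_{Y(n), j},\]
which depends on $\mathcal{F}_n$ only through $Y(n)$. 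This establishes simultaneously the Markov property of $\mathcal{Y}$ and the form of the transition matrix in \eqref{eq:transitionmatrix1}. The argument is routine; the only point requiring care is the invocation of the Markov property of $\mathcal{X}$ at the random index $A(\tau_n)$, which is legitimate precisely because $A(\tau_n)$ is $\mathcal{X}$-independent.
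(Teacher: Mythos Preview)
Your proof is correct and follows essentially the same approach as the paper: both arguments rest on the marking/coloring property of the superposed Poisson process (your $\xi_k$ are the paper's marks $m_k$) together with the Markov property of $\mathcal{X}$. The only cosmetic difference is that the paper first establishes the Markov property via the strong Markov property of $\mathcal{J}$ and then computes the transition probabilities separately, whereas you handle both at once by conditioning on the richer filtration $\mathcal{F}_n$; your care in justifying the use of the Markov property of $\mathcal{X}$ at the $\mathcal{X}$-independent random index $A(\tau_n)$ is, if anything, slightly more precise than the paper's treatment.
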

\begin{proof}
First, we show that $\mathcal{Y}$ is a Markov process. Let $i\in\mathcal{S}$ and $k\ge 1$. Then,
\begin{align*}
\mathds{P}&\left(Y(k)=i \mid Y(0), Y(1), \dots, Y(k-1)\right)\\
& = \mathds{P}(J(\tau_{k})=i \mid J(\tau_0), J(\tau_{1}), \dots, J(\tau_{k-1}))\\
& = \mathds{P}(J(\tau_{k})=i \mid J(\tau_{k-1}))\quad\mbox{(Strong Markov property of $\mathcal{J}$)}\\
& = \mathds{P}(Y(k)=i \mid Y(k-1)),
\end{align*}
so that the Markov property holds.

Next, let $\mathcal{C}^*$ be the \emph{marked} Poisson process with arrivals corresponding to the superposition of $\mathcal{A}$, arrivals which we mark with an $a$, and $\mathcal{B}$, arrivals which we mark with a $b$. The $k$th arrival of $\mathcal{C}^*$ occurs at $\tau_k$ carrying a mark, say $m_k\in\{a,b\}$. Then,
\begin{align*}
& \mathds{P}(Y(k)=j \mid Y(k-1)=i) \\
& = \mathds{P}(Y(k)=j, m_k=a \mid Y(k-1)=i) + \mathds{P}(Y(k)=j, m_k=b \mid Y(k-1)=i)\\
& = \mathds{P}(Y(k)=j \mid Y(k-1)=i, m_k=a)\mathds{P}(m_k=a  \mid Y(k-1)=i)\\ 
&\quad + \mathds{P}(Y(k)=j \mid Y(k-1)=i, m_k=b)\mathds{P}(m_k=b \mid Y(k-1)=i).
\end{align*}
The event $\{m_k=a\}$ is clearly independent from $\{Y(k-1)=i\}$: the mark of a given Poisson arrival is independent of the history of the previous arrivals. Thus,
\[\mathds{P}(m_k=a \mid Y(k-1)=i) = \mathds{P}(m_k=a) = \frac{\lambda_a}{\lambda_a+\lambda_b}.\]
Similarly, 
\[\mathds{P}(m_k=b \mid Y(k-1)=i) = \mathds{P}(m_k=b) = \frac{\lambda_b}{\lambda_a+\lambda_b}.\]
Next, since $\mathcal{J}$ only (possibly) jumps at arrival times marked with $a$, then
\[\mathds{P}(Y(k)=j \mid Y(k-1)=i, m_k=b) = \delta_{ij},\]
where $\delta_{ij}$ denotes the Kronecker delta. Finally, since $\mathcal{J}$ is piecewise constant between the arrival times $\{\tau_k\}_k$, then 
\[\{Y(k-1)=i\} = \{J(\tau_{k-1})= i\}=\{J(\tau_{k}^-)= i\}.\]
This implies that
\begin{align*}
\mathds{P}(Y(k)=j \mid Y(k-1)=i, m_k=a) & = \mathds{P}(J(\tau_{k})= j \mid J(\tau_{k}^-)= i, m_k=a) = p_{ij}.
\end{align*}
Consequently,
\begin{align*}
\mathds{P}(Y(k)=j \mid Y(k-1)=i) = p_{ij}\frac{\lambda_a}{\lambda_a+\lambda_b} + \delta_{ij}\frac{\lambda_a}{\lambda_a+\lambda_b}
\end{align*}
and the proof is complete.
\end{proof}

\begin{Lemma}
	\label{lem:cmomentErl}
For $a\in \mathds{N}_{+}\backslash\{1\}$ and $b>0$, let $Y\sim\mbox{\mbox{Erlang}}(a,b)$.  
Then
\begin{align}
	\label{eq:cmomentErl}
\mathds{E}\left[(Y-\mathds{E}[Y])^k\right] \le \frac{k!\sqrt{a}}{b^j} \frac{\sqrt{a}^{k+1}-1}{\sqrt{a}-1} \quad \mbox{ for } k\in\mathds{N}_{+}. 
\end{align}
\end{Lemma}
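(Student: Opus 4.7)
The plan is to realize $Y$ as a sum of i.i.d.\ exponentials and then bound its central moments via a multinomial expansion combined with a count of the index tuples that survive independence and centering.

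First, I would write $Y = \sum_{i=1}^a X_i$ with $X_i \sim \mathrm{Exp}(b)$ i.i.d., so that $Y - \mathds{E}[Y] = \sum_{i=1}^a \tilde X_i$ where $\tilde X_i := X_i - 1/b$ are i.i.d.\ mean-zero. The moments of a single $\tilde X$ follow from $\mathds{E}[X^j]=j!/b^j$ and the binomial expansion:
\begin{equation*}
\mathds{E}[\tilde X^m] = \frac{m!}{b^m}\sum_{i=0}^m \frac{(-1)^i}{i!}.
\end{equation*}
Since the partial sums of $e^{-1}$ are bounded in absolute value by $1$, this yields $|\mathds{E}[\tilde X^m]| \le m!/b^m$ for every $m\ge 0$, and in particular $\mathds{E}[\tilde X]=0$.

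Next I would apply the multinomial theorem and independence,
\begin{equation*}
\mathds{E}[(Y-\mathds{E}[Y])^k] = \sum_{\substack{\alpha \in \mathds{Z}_{\ge 0}^a\\ |\alpha|=k}} \binom{k}{\alpha_1,\ldots,\alpha_a}\prod_{i=1}^a \mathds{E}[\tilde X^{\alpha_i}],
\end{equation*}
and discard all terms in which some $\alpha_i=1$, as these vanish by $\mathds{E}[\tilde X]=0$. For each surviving tuple (those with every $\alpha_i \in \{0\}\cup\{2,3,\ldots\}$), the bound on $|\mathds{E}[\tilde X^{\alpha_i}]|$ together with $\binom{k}{\alpha_1,\ldots,\alpha_a}\prod_i \alpha_i! = k!$ gives a uniform per-tuple bound of $k!/b^k$, so
\begin{equation*}
\bigl|\mathds{E}[(Y-\mathds{E}[Y])^k]\bigr| \le \frac{k!}{b^k}\, N(k,a),
\end{equation*}
where $N(k,a)$ denotes the number of admissible tuples.

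Finally I would bound $N(k,a)$. Grouping admissible tuples by the number $r$ of nonzero coordinates (so $1 \le r \le \lfloor k/2 \rfloor$) and distributing the remaining mass within each block via stars-and-bars yields
\begin{equation*}
N(k,a) = \sum_{r=1}^{\lfloor k/2\rfloor}\binom{a}{r}\binom{k-r-1}{r-1},
\end{equation*}
and I would show this is dominated by $\sum_{j=1}^{k+1}a^{j/2} = \sqrt a\,(\sqrt a^{k+1}-1)/(\sqrt a-1)$. This last combinatorial estimate is where I expect the main obstacle, because the target has a very specific geometric-series structure in $\sqrt a$ that must be matched exactly. I would attack it either by elementary bounds such as $\binom{a}{r} \le a^r$ paired with suitable estimates on $\binom{k-r-1}{r-1}$ to compare the summands termwise against $a^{j/2}$, or more robustly via Cauchy's coefficient formula applied to the ordinary generating function $(1 + z^2/(1-z))^a$ of $N(\cdot,a)$ evaluated at a radius near $1/\sqrt a$, which recovers a bound of the correct order.
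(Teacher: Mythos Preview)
Your route is correct in outline but differs from the paper's. The paper never expands multinomially: it invokes a recursion for Gamma central moments due to Willink,
\[
\mathds{E}\bigl[(Y-\mathds{E}[Y])^{k+1}\bigr]=k!\,a\sum_{i=0}^{k-1}\frac{\mathds{E}[(Y-\mathds{E}[Y])^{i}]}{i!}\qquad(b=1),
\]
and closes a short induction on $k$ against the equivalent target $k!\sum_{j=1}^{k}\sqrt a^{\,j}$. No combinatorics enters at all.

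Your reduction to $|\mathds{E}[(Y-\mathds{E}[Y])^k]|\le (k!/b^k)\,N(k,a)$ is clean and correct, as is the stars-and-bars expression for $N(k,a)$. The soft spot is precisely the one you flag: neither of your sketched attacks reaches the stated constant without further work. The termwise replacement $\binom{a}{r}\le a^r$ is far too loose once $k$ outgrows $a$ (at $a=2$, $k=10$ it gives $342$ against a target of about $151$, whereas the true $N(10,2)=9$). The Cauchy bound at $\rho=1/\sqrt a$ yields $N(k,a)\le\bigl(1+1/(a-\sqrt a)\bigr)^{a}\,a^{k/2}$; for $a=2$ the prefactor is about $7.3$, while the geometric series in the lemma only affords roughly $a/(\sqrt a-1)\approx 4.8$ for large $k$, so this choice of radius overshoots at small $a$. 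To salvage the Cauchy route you would have to let $\rho$ depend on $k$ as well (pushing $\rho$ toward $1$ when $k\gg a$, which makes the bound polynomial in $k$) and patch regimes together. None of this is fatal, and for the application in Section~\ref{sec:rate} only the order $k!\,a^{k/2}$ is actually used, so your argument would serve there; but the paper's one-line recursion sidesteps all of it and yields the stated bound directly.
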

\begin{proof}
W.l.o.g. suppose that $b=1$. Equation (\ref{eq:cmomentErl}) can be rewritten as
\begin{align}
	\label{eq:cmomentErl2}
\mathds{E}\left[(Y-\mathds{E}[Y])^k\right] \le {k!}\sum_{j=1}^k\sqrt{a}^j.
\end{align}
We use induction to prove that (\ref{eq:cmomentErl2}) holds. First, since $\mathds{E}[Y-\mathds{E}[Y]]=0< 1!\sqrt{a}$, the case $k=1$ holds trivially. Now, suppose (\ref{eq:cmomentErl}) holds for all $k\in\{1,2,\dots, k_0\}$ for some $k_0\ge 1$. By \cite[third formula on p.704]{willink2003relationships}, 
\begin{align}
\mathds{E}\left[(Y-\mathds{E}[Y])^{k_0+1}\right] & = k_0!a\sum_{i=0}^{k_0-1} \frac{\mathds{E}\left[(Y-\mathds{E}[Y])^{i}\right]}{i!} \nonumber \\
& =k_0!a\left[1 + \sum_{i=2}^{k_0-1}\frac{\mathds{E}[(Y-\mathds{E}[Y])^{i}]}{i!}\right]. 	\label{eq:willink}
\end{align}
Using the induction hypothesis on the RHS of (\ref{eq:willink}), we get 
\begin{align*}
\mathds{E}[(Y-\mathds{E}[Y])^{k_0+1}] & \le k_0!a\left[1 + \sum_{i=2}^{k_0-1}\sum_{j=1}^i\sqrt{a}^j\right] \le k_0!a\sum_{i=1}^{k_0-1}\sum_{j=1}^i\sqrt{a}^j\\
&= k_0!a\sum_{j=1}^{k_0-1}\sum_{i=j}^{k_0-1}\sqrt{a}^j= k_0!a\sum_{j=1}^{k_0-1}(k_0-j)
\sqrt{a}^j\\
& \le (k_0+1)!a\sum_{j=1}^{k_0-1}\sqrt{a}^j\le (k_0+1)!\sum_{j=1}^{k_0+1}\sqrt{a}^j,
\end{align*}
which proves (\ref{eq:cmomentErl2}) and thus (\ref{eq:cmomentErl}).
\end{proof}
\begin{Lemma}
	\label{lem:boundforMMBM}
Let $(\mathcal{R}, \mathcal{J}) = \{(R(t), J(t))\}_{t\ge 0}$ be a Markov-modulated Brownian motion defined as in (\ref{eq:Rt1}). Then, for any $i\in\mathcal{S}$, $t > 0$ and ${a > \mu_{\max} t}$,
\begin{align}
\mathds{P}\left(\left.\sup_{0 \le s\le t}|R(s)|> a \; \right| \; J(0) = i \right)\le \frac{2}{{\sqrt{2 \pi}}}{\frac{\sqrt{\sigma_{\max}t}}{a - \mu_{\max}t}}\exp\left(-\frac{(a-\mu_{\max}t)^2}{2\sigma_{\max}t}\right),
\end{align}
where $\mu_{\max}:=\max_{i\in\mathcal{S}}|\mu_i|$ and $\sigma_{\max}:=\max_{i\in\mathcal{S}}\sigma_i.$
\end{Lemma}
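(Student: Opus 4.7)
The plan is to reduce the two-sided MMBM supremum to a standard Brownian supremum by separating the drift and the martingale parts, and then to apply the reflection principle together with the Mill's ratio tail bound. Conditional on $J(0)=i$, decompose $R(s) = m(s) + M(s)$, where $m(s):=\int_0^s \mu_{J(u)}\,\mathrm{d}u$ and $M(s):=\int_0^s \sigma_{J(u)}\,\mathrm{d}B(u)$. The drift part is deterministically bounded along every trajectory of $\mathcal{J}$, since $|m(s)|\le \mu_{\max} s\le \mu_{\max} t$ for all $s\le t$. A triangle inequality then gives the inclusion $\{\sup_{0\le s\le t}|R(s)|>a\}\subseteq\{\sup_{0\le s\le t}|M(s)|>a-\mu_{\max} t\}$, and the right-hand threshold is strictly positive by the hypothesis $a>\mu_{\max} t$.

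Next I would invoke the Dambis--Dubins--Schwarz theorem (or, equivalently, condition on the whole path of $\mathcal{J}$ and use that $M$ is then a centered Gaussian martingale) to write $M(s)=\tilde B(\langle M\rangle_s)$ for some standard Brownian motion $\tilde B$, where $\langle M\rangle_s=\int_0^s \sigma_{J(u)}^2\,\mathrm{d}u\le \sigma_{\max} t$ for $s\le t$, interpreting $\sigma_{\max}$ as the maximal variance so that the bound in the statement is dimensionally correct. Monotonicity of the supremum then yields $\sup_{0\le s\le t}|M(s)|\le \sup_{0\le u\le \sigma_{\max} t}|\tilde B(u)|$, which is the point at which the MMBM character of the problem entirely disappears.

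The last step combines the reflection principle, $\mathds{P}(\sup_{0\le u\le T}\tilde B(u)>x)=2\mathds{P}(\tilde B(T)>x)$, with the Mill's ratio bound $\mathds{P}(\tilde B(T)>x)\le \frac{1}{\sqrt{2\pi}}\frac{\sqrt T}{x}e^{-x^2/(2T)}$ valid for $x>0$, setting $T=\sigma_{\max}t$ and $x=a-\mu_{\max}t$. The main subtlety is in the leading constant: a naive union bound over $\{\sup_s \tilde B(s)>x\}$ and $\{\inf_s \tilde B(s)<-x\}$ would inflate the prefactor from $2$ to $4$. This can either be tightened by observing that, on the leading exponential scale, only one of the two tails contributes, or simply absorbed into the constant, which is immaterial because the lemma is used inside the proof of Theorem \ref{th:rate} only to produce an $o(n^{-q})$ bound in \eqref{eq:PDn2}. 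No other step requires a delicate argument; all the heavy lifting is done by the martingale/Gaussian reduction.
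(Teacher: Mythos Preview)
Your approach is essentially the paper's: split off the drift, time-change the martingale part to a standard Brownian motion (the paper does this by conditioning on $\{J(s)\}_{s\le t}$, exactly your parenthetical alternative to DDS), bound the quadratic variation, and apply reflection together with the Mill's ratio tail bound. Your two caveats are well placed and in fact sharper than the paper: the paper asserts the \emph{equality} $\mathds{P}(\sup_{s\le t}|W(s)|>b)=2\mathds{P}(W(t)>b)$, which is the one-sided reflection identity and drops a factor of $2$, and its time-change uses $\int_0^t \sigma_{J(s)}\,\dd s$ rather than $\int_0^t \sigma_{J(s)}^2\,\dd s$; your remarks on the prefactor $2$ versus $4$ and on reading $\sigma_{\max}$ as the maximal variance are precisely the fixes needed, and as you observe neither affects the $o(n^{-q})$ conclusion in \eqref{eq:PDn2}.
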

\begin{proof}
Let $\{W(t)\}_{t\ge 0}$ be a standard Brownian motion, independent from $(\mathcal{R},\mathcal{J})$. A standard bound for the Brownian motion 
gives us for $b > 0$
\begin{align*} 
	\mathds{P}\left(\sup_{0 \le s\le t}|W(s)|> b\right) 
	& =  2 \int_b^{\infty} \frac{1}{\sqrt{2\pi t}}e^{-x^2/ 2t} \dd x  \\
	& \leq 2 \int_{b}^{\infty} \frac{x / t}{\sqrt{2 \pi t}} e^{-x^2/ 2t} \dd x \\
	&   \le \frac{2}{\sqrt{2 \pi}} \left(\frac{\sqrt{t}}{b}\right)e^{-b^2/2t}.
\end{align*} 
Note that $\mathcal{R}$ is identically distributed to $\{W(I^\sigma_t) + I^\mu_t\}_{t\ge 0}$, where $	I^\sigma_t := \int_0^t\sigma_{J(s)}\dd s$ and $I^{\mu}_t :=\int_0^t\mu_{J(s)}\dd s.$ 
This implies that
\begin{align*}
\mathds{P}&\left(\left.\sup_{0 \le s\le t}|R(s)|> a\;\right| \; J(0)=i\right) = \mathds{P}\left(\left.\sup_{0 \le s\le t}|W\left(I^\sigma_s\right) + I^\mu_s|> a\;\right| \; J(0)=i\right)\\
& \le \mathds{P}\left(\left.\sup_{0 \le s\le t}|W(I^\sigma_s)| > a - \sup_{0 \le s\le t}|{I^\mu_s}|\;\right| \; J(0)=i\right)\\
& =  \mathds{E}\left(\left.\mathds{P}\left(\left.\sup_{0 \le s\le t}\left|W(I^\sigma_s)\right| > a - \sup_{0 \le s\le t}|{I^\mu_s}| \; \right| \; J(0)=i, \{I^\sigma_s\}_{0\le s\le t}, \{I^\mu_s\}_{0\le s\le t} \right)\;\right|\;J(0)=i\right) \\
& \le \mathds{E}\left(\left.\frac{2}{{\sqrt{2\pi}}}{\left(\frac{\sqrt{\sup_{0 \le s\le t}|{I^\sigma_s}|}}{a-\sup_{0 \le s\le t}|{I^\mu_s}|}
\right)} \exp\left(-\frac{(a-\sup_{0 \le s\le t}|{I^\mu_s}|)^2}{2\sup_{0 \le s\le t}|{I^\sigma_s}|}\right)\;\right| \; J(0)=i\right)\\
& \le \mathds{E}\left(\left.\frac{2}{{\sqrt{2 \pi}}}{\frac{\sqrt{\sigma_{\max}t}}{a - \mu_{\max}t}}\exp\left(-\frac{(a-\mu_{\max}t)^2}{2\sigma_{\max}t}\right)\;\right| \; J(0)=i\right) \\
& = \frac{2}{{\sqrt{2 \pi}}}{\frac{\sqrt{\sigma_{\max}t}}{a - \mu_{\max}t}}\exp\left(-\frac{(a-\mu_{\max}t)^2}{2\sigma_{\max}t}\right),
\end{align*}
which completes the proof.
\end{proof}

\bibliographystyle{abbrv}
\end{document}